% Changes by Erica: trying on the template

\documentclass[]{interact}
\setlength{\voffset}{-.25in}
\sloppy

\makeatletter
\newcommand{\monthyear}[1]{\def\@monthyear{\uppercase{#1}}}
\newcommand{\volnumber}[1]{\def\@volnumber{\uppercase{#1}}}
\makeatother
  % Real numbers
  % Rational numbers
  % Complex numbers
\newcommand{\N}{{\mathbb N}}  % Natural numbers
\newcommand{\Z}{{\mathbb Z}}  % Integers

\renewcommand{\restriction}{\mathord{\upharpoonright}}

\usepackage{array}
\usepackage{latexsym}
\usepackage{anyfontsize}
\usepackage{tikz}
\usepackage{url}
\usepackage{breakurl}
\usepackage{amsmath}
\usepackage{amsmath,amsthm,graphicx,color,enumerate,xcolor,spverbatim,setspace,amssymb}
\usepackage[all,2cell,ps]{xy}
\usepackage{amsfonts}
\usepackage{multicol}
\usepackage{soul}
\usepackage[
    colorlinks=False,     % colored links instead of boxes
    linkcolor=blue,      % color for \ref links
    citecolor=blue,       % color for \cite links  
    urlcolor=purple,     % color for \url links
    bookmarks=true,       % create PDF bookmarks
    bookmarksopen=true,
    frenchlinks = False,
    linktoc = true,
    pagebackref = False    
]{hyperref}
\usepackage[utf8]{inputenc}
\usepackage{subcaption}
\usetikzlibrary{shapes.geometric, arrows, positioning}

% This is to define notation throughout the paper
%\newenvironment{notation}
%{\subsection*{Notation}\begin{description}}
%{\end{description}}

\newcommand{\bburl}[1]{\textcolor{blue}{\url{#1}}}

\tikzstyle{startstop} = [rectangle, rounded corners, minimum width=3cm, minimum height=1cm, text centered, draw=black, fill=white]
\tikzstyle{process} = [rectangle, minimum width=3cm, minimum height=1cm, text centered, draw=black, fill=white]
\tikzstyle{decision} = [diamond, minimum width=3cm, minimum height=1cm, text centered, draw=black, fill=white]
\tikzstyle{arrow} = [thick,->,>=stealth]

%-------------------------------------------------------------------%
%-------------------------------------------------------------------%
%-------------------------------------------------------------------%
%-------------------------------------------------------------------%
%-------------------------------------------------------------------%

% edited emphasize command. Use this to highlight new words
\let\emph\relax % there's no \RedeclareTextFontCommand
\DeclareTextFontCommand{\emph}{\em}     %I originally had \bfseries in here, but it looks bad

% command for vector. Put subscripts OUTSIDE of vtr
\newcommand{\vtr}[1]{\vec{\mathbf{#1}}}

% Theorem styles or whatever
\theoremstyle{plain}
\numberwithin{equation}{section} %change this to make globally numbered equations
\newtheorem{thm}{Theorem}[section] %remove [section] to make globally numbered environments
\newtheorem{theorem}[thm]{Theorem}
\newtheorem{lemma}[thm]{Lemma}
\newtheorem{example}[thm]{Example}
\newtheorem{definition}[thm]{Definition}
\newtheorem{proposition}[thm]{Proposition}

\newtheorem{remark}[thm]{Remark}
\newtheorem{question}[thm]{Question}
\newtheorem{conjecture}[thm]{Conjecture}

\numberwithin{table}{section} %change this and the following line to make globally numbered tables and figures
\numberwithin{figure}{section}

% shortcuts 
\newcommand{\X}{\vtr{X}}
%-------------------------------------------------------------------%
%-------------------------------------------------------------------%
%-------------------------------------------------------------------%
%-------------------------------------------------------------------%
%-------------------------------------------------------------------%

\begin{document}

\received{Compiled October 8, 2025}

\title{General Recurrence Multidimensional Zeckendorf Representations}

\author{
\name{Jiarui Cheng\textsuperscript{a}\thanks{cheng.jiar@northeastern.edu},
Steven J. Miller\textsuperscript{b}\thanks{sjm1@williams.edu}
Sebastian Rodriguez-Labastida\textsuperscript{c}\thanks{sebasrodriguo@hotmail.com},
Tianyu Shen\textsuperscript{d}\thanks{1790299332@shu.edu.cn},
Alan Sun\textsuperscript{e}\thanks{alansun@umich.edu},
and Garrett Tresch\textsuperscript{f}\thanks{treschgd@tamu.edu}}
\affil{
        \textsuperscript{a}Northeastern University, Boston, MA 02115;\\
        \textsuperscript{b}Department of Mathematics, Williams College, Williamstown, MA 01267;\\
       \textsuperscript{c}Universidad Panamericana, CDMX 03920;\\
       \textsuperscript{d}Shanghai University, Shanghai, China,200444;\\
       \textsuperscript{e}University of Michigan, Ann Arbor, MI 48109;\\
       \textsuperscript{f}Department of Mathematics, Texas A\&M University, College Station, TX 77840}   
}

\maketitle

{\bf Article type}: research 

\bigskip

\begin{abstract}
    We present a multidimensional generalization of Zeckendorf's Theorem (any positive integer can be written uniquely as a sum of non-adjacent Fibonacci numbers) to a large family of linear recurrences. This extends work of Anderson and Bicknell-Johnson in the multi-dimensional case when the underlying recurrence is the same as the Fibonacci one. Our extension applies to linear recurrence relations defined by vectors $\vtr{c} = (c_1, c_2, \ldots, c_k)$ such that $c_1\geq c_2\geq\cdots \geq c_k$ and where $c_k = 1$. Under these conditions, we prove that every integer vector in $\mathbb{Z}^{k-1}$ admits a unique $\vtr{c}$-satisfying representation ($\vtr{c}$-SR) as a linear combination of vectors, $(\vtr{X_n})_{n\in \Z}$ defined for every $n\in \mathbb{Z}$ by initially by zero and standard unit vectors and then the recursion $$\vtr{X}_{n} := c_1\vtr{X}_{n -1} + c_2\vtr{X}_{n - 2} + \cdots + c_k\vtr{X}_{n-k}.$$
    To establish this, we introduce carrying and borrowing operations that use the defining recursion to transform any $\vtr{c}$ representation into a $\vtr{c}$-SR while preserving the underlying vector. Then, by establishing bijections with properties of scalar Positive Linear Recurrence Sequences (PLRS), we prove that these multidimensional decompositions inherit various properties, such as the number of summands exhibits Gaussian behavior and summand minimality of $\vtr{c}$-SRs over all all $\vtr{c}$-representations.
\end{abstract}

\begin{keywords}
	Fibonacci numbers, Zeckendorf decompositions, Positive
linear recurrence relations, Multidimensional decompositions.
\end{keywords}

%\newpage
\tableofcontents

\section{Introduction and Preliminaries}
\subsection{Introduction}
\indent \indent The Fibonacci numbers have inspired results of enduring interest for over 2000 years. An interesting recent one is Zeckendorf's Theorem \textcolor{blue}{\cite{Zeckendorf72}}, which states that every positive integer can be represented uniquely as a sum of non-consecutive Fibonacci\footnote{We define the Fibonacci numbers by $F_1 = 1,  F_2 = 2$, and $F_{n}=F_{n-1}+F_{n-2}$ for $n\geq 3$.} numbers $(F_n)_{n=1}^\infty$. Since then, Zeckendorf's theorem has been extended to a large family of recurrence sequences by first specifying a rule to guarantee a unique representation and then deducing the structure of the sequence (see \textcolor{blue}{\cite{ABJ11, BILMTB-A15, BCCSW, BDEMMTTW, BILMT2, CFHMN1, DDKMMV14, DFFHMPP, GTNP, Ha, KKMW11, MW12, MW2}} and the theorems therein). We take particular interest in linear recurrence sequences (LRS's). Given a vector $(c_1, \ \ldots , \ c_k) \in \Z^{k}$ and some proper initial terms, a LRS, $(X_n)_{n\in \N}$, is defined by the relation 
$$X_{n+1} \ =\ c_1 X_{n} + \cdots + c_k X_{n+1-k}$$
for all $n\geq k$. In this case, we call the underlying vector $\vtr{c}=(c_1, \ \ldots, \ c_k)$ the \textit{recurrence vector}. Freaenkel \textcolor{blue}{\cite{Fr}} generalized Zeckendorf's result to all linear recurrences with a weakly decreasing recurrence vector (see Definition \textcolor{blue}{\ref{Def: weakly dec}}). More recently, Miller and Wang \textcolor{blue}{\cite{MW12, MW2}}, and independently Hamlin \textcolor{blue}{\cite{Ha}}, have generalized Zeckendorf's theorem to LRS's with a nonnegative recurrence vectors $\vtr{c}$ with the additional restriction that $c_1 \geq 1$. Such recurrence sequences are called \emph{Positive Linear Recurrence Sequences} or simply \emph{PLRS's} (see Definition \textcolor{blue}{\ref{def:c-rec seq}}). Furthermore, various authors \textcolor{blue}{\cite{Ha, CFHMN1, CFHMNPX}} have provided evidence that this is the broadest class of LRS's for which one can expect Zeckendorf's theorem to extend in a simple manner. We focus on PLRS's, for which the representation rule is the notion of a \textit{legal decomposition} that formalizes when a representation of a positive integer over the PLRS is not able to be reduced using the underlying recurrence relationship. The restriction to legal representations then allows for unique representations of positive integers over linear combinations of the underlying PLRS.\\

In \textcolor{blue}{\cite{ABJ11}} Anderson and Bicknell-Johnson transfer Zeckendorf's Theorem into the multidimensional setting. In particular, they define a sequence of vectors $(\vtr{X}_n)_{n\in \Z}$ in $\Z^{k-1}$ and show that vectors of $\Z^{k-1}$ can each be uniquely represented as the sum of elements from $(\vtr{X}_{-n})_{n=1}^\infty$ where this representation does not contain $k$ consecutive elements of $(\vtr{X}_{-n})_{n=1}^\infty$. We extend these results by transferring PLRS's with weakly decreasing recurrence vectors to the multidimensional setting. However, this multidimensional extension is not a straightforward application of the techniques of \textcolor{blue}{\cite{ABJ11}} to general PLRS's. In fact, Remark \textcolor{blue}{\ref{rem: c_k=1}} illustrates that not all of these vectorized PLRS's will produce integer vectors while Example \textcolor{blue}{\ref{Ex: Bad Algorthm}} demonstrates  that not all vectors will have existing representations of the desired form when $\vtr{c}=(1,3,1)$. The restriction to PLRS's with weakly decreasing recurrence vectors\footnote{These restrictions to weakly decreasing recurrence vectors can be found in many other results related to recurrence sequences. In addition to \textcolor{blue}{\cite{Fr}}, in \textcolor{blue}{\cite{CHHMPV18b, CHHMPV18b}} the same constraints are used to guarantee summand minimality (see Section \textcolor{blue}{\ref{sec: SM}}).} such that $c_k=1$ remedies the issues seen in both Remark \textcolor{blue}{\ref{rem: c_k=1}} and Example \textcolor{blue}{\ref{Ex: Bad Algorthm}} as the constraint guarantees the termination of a process that converts an arbitrary multidimensional decomposition over a vectorized PLRS into one that generalizes legal decompositions and shows uniqueness.
 
\subsection{Preliminary Definitions}

Throughout this paper, $k \geq 2$ is a fixed integer and $\vtr{c} = (c_1,\ldots, c_k)$ is an integer vector such that $c_1>0$, $c_2,\ldots, c_{k-1} \geq 0$ and $c_k=1$. In addition, we use the notation for sequences, $(a_n)_{n=1}^\infty$, and infinite strings, $a_1a_2\ldots$, interchangeably.
% Credit: Alan

The following definition is identical to that of a Positive Linear Recurrence Sequence (PLRS) coined and studied in \textcolor{blue}{\cite{KKMW11}}, though they do not restrict to $c_k=1$.

\begin{definition}{\rm{(}\emph{PLRS}, \textcolor{blue}{\cite{KKMW11}}\rm{)}}\label{def:c-rec seq}
    We say $(X_n)_{n=1}^{\infty} \subseteq \mathbb{Z}$ is a \textbf{$\vtr{c}$-recursive sequence} (or \textbf{{$\vtr{c}$-recurrence}}) if the following conditions hold:
    \begin{enumerate}
        \item[(1)] $X_1 = 1$ and for all $n = 2, 3,\ \ldots,\ k$, 
        \[X_n\ =\ c_1X_{n - 1} + c_2X_{n - 2} + \cdots + c_{n - 1}X_1 + 1 \text{, \ and}\] 
        
        \item[(2)] when $n > k$, the following recurrence is satisfied: \[X_n\ =\ c_1X_{n - 1} + c_2X_{n - 2} + \cdots + c_kX_{n - k}.\]
    \end{enumerate}
\end{definition}

\noindent 

\begin{remark}\label{Rem: Fib and Custom rec.}
      Note that in the case that $c_n = 1$ for all $n$, $(X_n)_{n=1}^\infty$ exhibits the $k$-bonacci recurrence $($see \rm{\textcolor{blue}{\cite{ABJ11}}}\ \emph{where the $k$-bonacci sequence is defined with slightly different initial terms$)$. In particular, if in addition $k=2$ then $(X_n)_{n=1}^\infty$ is the Fibonacci Sequence.}
\end{remark}

\begin{example}\label{Ex: PLRS seq}
    \begin{itemize}
        \item[]
       \item The Tribonacci Recurrence $[ k=3$, $\vec{c} = (1, 1, 1)].$            \begin{itemize}
            \item[]
                \item Initial terms:
                    \begin{align*}
                        X_1 &\ =\ 1, \\
                        X_2 &\ =\ 1 \cdot 1 + 1\ =\ 2, \\
                        X_3 &\ =\  1 \cdot 2 + 1 \cdot 1 + 1\ =\ 4.
                    \end{align*}
    
                \item Recurrence for $n > 3$:
                    \[ X_n\ =\ X_{n-1} + X_{n-2} + X_{n-3}. \]
                \item Sequence: $1, 2, 4, 7, 13, 24, 44, \ldots$\ . 
            \end{itemize}
        \item[]
        \item A Custom Recurrence $[k=3$, $\vec{c} = (2, 1, 1)].$
            \begin{itemize}
            \item[]
                \item Initial terms:
                    \begin{align*}
                        X_1 &\ =\  1, \\
                        X_2 &\ =\  2 \cdot 1 + 1\ =\ 3, \\
                        X_3 &\ =\  2 \cdot 3 + 1 \cdot 1 + 1\ =\ 8.
                    \end{align*}
                \item Recurrence for $n > 3$:
                \[ X_n\ =\ 2X_{n-1} + X_{n-2} + X_{n-3}. \]
                \item Sequence: $1, 3, 8, 20, 51, 130, \dots$\ . 
            \end{itemize}
    \end{itemize}
\end{example}

 We generalize the concept of a $\vtr{c}$-recursive sequence to higher dimensional vectors. To do so, we generalize the notion of \textbf{\emph{$k$-bonnaci vectors}} from \textcolor{blue}{\cite{ABJ11}}.

% Credit: Tianyu created, Alan revised, Sebastian edited, Tianyu edited
\begin{definition}\label{Def: Rec Vectors}
    We define the \textbf{$\vtr{c}$-recurrence vectors sequence}, $(\vtr{X_n})_{n\in \mathbb{Z}} \subseteq \mathbb{Z}^{k-1}$,  as
    \begin{enumerate}
        \item[(1)] $\vtr{X}_0 := \vtr{0}$,
        \item[(2)] $\vtr{X}_{-i} := \vtr{e}_i$ for all $1 \leq i \leq k - 1$ where $\vtr{e}_i$ is the standard basis vector, and
        \item[(3)] $\X_n := c_1\X_{n - 1} + c_2\X_{n - 2} + \cdots + c_k\X_{n-k}$  for all  $n \in \mathbb{Z}^+$.
    \end{enumerate}
\end{definition}

Since $c_k \neq 0$, we see that after rearranging terms and shifting indices that
\begin{equation} \label{eq:1}
     \vtr{X}_n\ =\ \frac{\vtr{X}_{n+k} - \sum_{i =1}^{k - 1}c_i\vtr{X}_{n + k - i}}{c_k},
\end{equation} 
which lets us work backwards to define vectors with negative indices. As we assume $c_k = 1$, Equation \textcolor{blue}{\eqref{eq:1}} can be simplified to
\begin{equation} \label{eq:2}
     \vtr{\mathbf{X}}_n\ =\ \vtr{\mathbf{X}}_{n+k} - \sum_{i=1}^{k-1}c_i\vtr{\mathbf{X}}_{n+k-i}.
\end{equation} 

 \begin{remark}\label{rem: c_k=1}
     Since Equation \textcolor{blue}{\eqref{eq:2}} lets us define $\X_{-n}$ recursively for all $n \geq k$ we can guarantee that for each $n\in \mathbb{Z}$, $\vtr{X}_n$ is well defined as an element of $\mathbb{Z}^{k-1}$. However, if $c_k \neq 1$ it is not guaranteed that for every $n<0$ all terms of $\vtr{X}_{n}$ are integers\footnote{It is important to say that this is not necessary to restrict $c_1=1$ to guarantee that all terms in the sequence are well defined. For example, we might ask that $c_k \mid c_i$ for all $1\leq i \leq k-1$.}.
 \end{remark}

From this point forward $( \vtr{X}_n )_{ n \in \mathbb{Z}}$ represents the $\vtr{c}$-recurrence vector sequence.

% Credit: Tianyu
\begin{example}\label{Ex: (2,1,1) Rec Vectors}
    Here, we list several terms of the $\mathbf{c}$-recurrence vector sequence with $c_1 = 2, c_2 = 1, c_3 = 1$.
\begin{align*}
    \vtr{\mathbf{X}}_0  \ &\ =\  (0,0) \\
    \vtr{\mathbf{X}}_{-1} &\ =\ (1,0) \\
    \vtr{\mathbf{X}}_{-2} &\ =\ (0,1) \\
    \vtr{\mathbf{X}}_{-3} &\ =\  (-2,-1) \\
    \vtr{\mathbf{X}}_{-4} &\ =\ (3,-1) \\
    \vtr{\mathbf{X}}_{-5} &\ =\ (1,4) \\
    \vtr{\mathbf{X}}_{-6} &\ =\ (-9,-3) \\
    \vtr{\mathbf{X}}_{-7} &\ =\ (10,-6) \\
    \vtr{\mathbf{X}}_{-8} &\ =\ (9,16) \\
    \vtr{\mathbf{X}}_{-9} &\ =\ (-38,-7). 
\end{align*}
\end{example}

Note how the recurrence $\vtr{\mathbf{X}}_n = \vtr{\mathbf{X}}_{n+k} - \sum_{i=1}^{k-1}c_i\vtr{\mathbf{X}}_{n+k-i}$ from Equation \textcolor{blue}{\eqref{eq:2}} generates these vectors backward.\\

Zeckendorf \textcolor{blue}{\cite{Zeckendorf72}} proved that any positive integer $n$ can be written uniquely as a sum

$$n\ =\ \sum_{n\geq 2}d_nF_n$$

\noindent such that $d_n\in \{0,1\}$ for all $n$, and no string of two consecutive $d_n$'s equal $1$. Equivalently, for every $n\in \mathbb{N}$ there is a unique infinite string of nonnegative integers $(d_2d_3,\ldots)$ with finitely many nonzero terms such that $n=\sum_{n\geq 2}d_nF_n$, no $d_n$ exceeds $1$, and no copy of $11$ can be found in the string. This result was greatly extended to the more general realm of PLRS's in \textcolor{blue}{\cite{MW12}} by using similar restrictions on the underlying string of coefficients, as well as in many different settings  (for example \textcolor{blue}{\cite{ABJ11}}, \textcolor{blue}{\cite{CHHMPV18}} and \textcolor{blue}{\cite{KKMW11}}). Informally, uniqueness is ensured by restricting both the size of each term and forbidding a copy of the defining PLRS recurrence within this underlying string of coefficients. We exactly match these restrictions in the vector case with the following definition.

% Credit: Sebastian
\begin{definition}
\label{defintion:SR}
    Let $\vtr{v} \in \mathbb{Z}^{k-1}$ be any vector. We call a sequence $(a_n)_{n = 1}^{\infty}$ a $\vtr{c}$ \textbf{\textit{- satisfying representation}} or simply a \textbf{$\vtr{c}$-SR of $\vtr{v}$ }if the following conditions hold. 
    \begin{enumerate}
        \item  There exists an $m\in \mathbb{N}$ such that $a_n=0$ for all $n>m$.
        \item We have $\vtr{v} = \sum_{n = 1}^m a_n \vtr{X}_{-n}$.
        \item We have $a_m > 0$, and $a_n \geq 0$ for all $1\leq n \leq m$.
        \item One of the following holds. \begin{itemize}
                \item We have $m < k$ and $a_n = c_n$ for all $1 \leq n \leq m$.
                \item There exists $s \in \{0, \ \ldots, \ k\}$ such that
                \begin{equation}
                    a_1\ =\ c_1,\hspace{0.2cm} \ldots, \hspace{0.2cm} a_{s-1}\ =\ c_{s-1} \hspace{0.3cm} \text{and }\hspace{0.2cm} a_s\ <\ c_s,
                \end{equation}
                 and there exists an $ \ell \geq 0$ such that $a_{s+1},\ \ldots, \ a_{s+\ell} = 0$, and $(a_{s+ \ell +n} )_{n=1}^{\infty}$ is $\vtr{c}$-SR.
            \end{itemize}
    \end{enumerate}
    We also refer to a finite sequence of nonnegative integers, $(a_n)_{n=1}^m$, as a $\vtr{c}$-SR if the sequence $(b_n)_{n=1}^\infty$ defined by

    \[b_n\ =\ \begin{cases}
        a_n, & \text{if }1\leq n\leq m; \\
        0, & \text{otherwise}
    \end{cases}\]

    \noindent is a $\vtr{c}$-SR.

\end{definition}
 
For example, if $\vtr{c}=(4,2,1)$ then the finite sequence $2,4,2,0,1$ is a $\vtr{c}$-SR of the induced vector while this is not the case for either of the finite sequences $2,4,2,1$ (this sequence contains a copy of $4,2,1$) or $2,4,3$ (as the element $3$ is too large).

For each $k\geq 2$, let $\vtr{1}_k\in \mathbb{Z}^{k-1}$ denote the $(k-1)$-vector consisting of all $1$'s. In our notation, Anderson and Bicknell-Johnson show in \textcolor{blue}{\cite{ABJ11}} that there is a unique vectorized analogue for the $k$-Fibonacci Zeckendorf Theorem.

\begin{theorem}{\rm{(}\textcolor{blue}{\cite{ABJ11}\label{thm: ABJ}}, [\emph{Theorem 2}]\rm{)}}
Every $\vtr{v}\in \mathbb{Z}^{k-1}$ has a unique $\vtr{1}_k$-satisfying representation.  

\end{theorem}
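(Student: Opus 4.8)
The plan is to establish existence and uniqueness separately, exploiting the fact that for $\vtr{c} = \vtr{1}_k$ the $\vtr{c}$-recurrence vectors are exactly the $k$-bonacci vectors of \textcolor{blue}{\cite{ABJ11}} (see Remark \textcolor{blue}{\ref{Rem: Fib and Custom rec.}}), so a $\vtr{c}$-SR in the sense of Definition \textcolor{blue}{\ref{defintion:SR}} is precisely a representation avoiding $k$ consecutive positive coefficients together with the size bound $a_n \le 1$. For existence, I would start from any vector $\vtr{v} \in \mathbb{Z}^{k-1}$ and first produce \emph{some} representation $\vtr{v} = \sum_n a_n \vtr{X}_{-n}$ with $a_n \in \mathbb{Z}$ of finite support: since $\vtr{X}_{-1}, \ldots, \vtr{X}_{-(k-1)}$ are the standard basis vectors $\vtr{e}_1, \ldots, \vtr{e}_{k-1}$, the trivial representation $\vtr{v} = \sum_{i=1}^{k-1} v_i \vtr{X}_{-i}$ works. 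Then I would apply the carrying and borrowing operations advertised in the introduction to this starting representation: repeatedly replacing a coefficient $a_n \ge 2$ or any local block matching the forbidden pattern by the equivalent combination dictated by the recursion $\vtr{X}_{-n} = \vtr{X}_{-n+k} - c_1\vtr{X}_{-n+k-1} - \cdots - c_{k-1}\vtr{X}_{-n+1}$, and symmetrically borrowing to clear negative coefficients. Each such move preserves the vector $\vtr{v}$ by construction; the content of the argument is that the process terminates, which is exactly where the hypotheses $c_1 \ge c_2 \ge \cdots \ge c_k$ and $c_k = 1$ are used (as the introduction stresses, without these one gets the pathologies of Remark \textcolor{blue}{\ref{rem: c_k=1}} and Example \textcolor{blue}{\ref{Ex: Bad Algorthm}}).

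For uniqueness, the cleanest route is a monovariant/lexicographic comparison: suppose $\sum_n a_n \vtr{X}_{-n} = \sum_n b_n \vtr{X}_{-n}$ with both $(a_n)$ and $(b_n)$ legal $\vtr{c}$-SRs and $(a_n) \neq (b_n)$. Looking at the largest index $m$ where they differ, one derives a contradiction by bounding the "tail" of a legal representation: in the $k$-bonacci case a legal string $d_1 d_2 \cdots$ of $0$'s and $1$'s with no $k$ consecutive $1$'s satisfies $\sum_{n > N} d_n \vtr{X}_{-n}$ lying in a controlled region, so two distinct legal representations cannot have the same high-order behavior. Concretely, I would mimic the greedy-algorithm proof of \textcolor{blue}{\cite{ABJ11}}: show that the leading nonzero coefficient of any $\vtr{c}$-SR of $\vtr{v}$ must occur at a forced position determined by $\vtr{v}$, peel it off, and induct on a suitable notion of size (e.g. the number of summands, or the index of the leading term) to pin down every subsequent coefficient.

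Alternatively — and this may be the intended argument given the paper's emphasis on "bijections with properties of scalar PLRS" — one can reduce directly to Theorem \textcolor{blue}{\ref{thm: ABJ}} as a black box is not available (it \emph{is} the statement), so instead reduce to the scalar $k$-bonacci Zeckendorf theorem: the map sending a string $(a_n)$ to the integer $\sum_n a_n X_{-n}$ does not make sense with negative indices, but there should be a bijection between legal strings of length $\le N$ and residue classes / lattice points of $\mathbb{Z}^{k-1}$ obtained by noting that $(\vtr{X}_{-1}, \ldots, \vtr{X}_{-(k-1)})$ spans $\mathbb{Z}^{k-1}$ and that shifting the window by one recursion step induces the $k$-bonacci carry on the scalar side. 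I would set up the counting bijection: the number of legal $\vtr{c}$-SRs using only $\vtr{X}_{-1}, \ldots, \vtr{X}_{-N}$ equals the number of legal $k$-bonacci strings of length $N$, which by the scalar theory is $X_{N+1}$ (the $(N+1)$-st $k$-bonacci number), and one shows these vectors are exactly a fundamental domain of size $X_{N+1}$ for an appropriate sublattice, forcing the representation to be unique on each coset and hence on all of $\mathbb{Z}^{k-1}$ as $N \to \infty$.

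The main obstacle I anticipate is the termination of the carry/borrow algorithm in the existence half: because we are working with negative indices and vectors rather than positive integers, there is no obvious "the value went down" monovariant, and indeed Example \textcolor{blue}{\ref{Ex: Bad Algorthm}} shows termination genuinely fails outside the weakly-decreasing regime. I expect the proof must introduce a carefully chosen potential function — perhaps a weighted count of summands, or a lexicographic rank on $(\text{leading index}, \text{sum of coefficients}, \ldots)$ — and verify that every carry strictly decreases it and every borrow does too (or that borrows are bounded in number once carries are controlled), using $c_1 \ge \cdots \ge c_k = 1$ at each inequality. Uniqueness, by contrast, should be comparatively routine once the legal-string combinatorics of \textcolor{blue}{\cite{ABJ11}} are transported over, since the forbidden-pattern condition is local and the basis vectors $\vtr{e}_i$ give an immediate handle on the leading coefficients.
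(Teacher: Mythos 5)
Your reduction of the $\vtr{1}_k$ case to strings over $\{0,1\}$ with no $k$ consecutive ones is correct, and your overall plan (carry/borrow for existence, reduction to the scalar theory for uniqueness) points in the same direction as the paper's proof of Theorem \ref{thm: main}, which covers this statement as the special case $\vtr{c}=\vtr{1}_k$. But there is a genuine gap at exactly the point you flag yourself: the termination of the carry/borrow process is the entire content of the existence half, and you leave it at ``I expect the proof must introduce a carefully chosen potential function.'' The paper does not attack an arbitrary starting representation such as $\sum_{i} v_i \vtr{e}_i$ (whose coefficients can be negative and large); instead it builds $\vtr{v}$ up one step at a time, so that at every stage one only has to repair a $\vtr{c}$-NSR, that is, a legal string with a single coefficient incremented by one (Definition \ref{defintion:NSR}). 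For such strings it proves two concrete statements: Proposition \ref{prop:End complete} (an end-complete NSR is fixed by carries alone, by induction on the number of chunks of Definition \ref{Def: Chunks}), and Proposition \ref{prop: general cSR}, whose termination argument rests on the pair of monovariants $G$ and $G_n$ of Definition \ref{Def: sums of coefs}: each iteration either strictly decreases the total coefficient sum $G$ (by $\sum_i c_i - 1$, for a net carry) or keeps $G$ fixed while strictly increasing the prefix sum up to the first overfilled element, and since that prefix sum is bounded by $G$ the process must stop. None of this is supplied in your sketch, and your additional plan of ``borrowing to clear negative coefficients'' lies outside the regime these monovariants control, so it would require its own nontrivial termination proof that you do not give.

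For uniqueness, your second idea is the right one in spirit, but the paper makes it precise with the explicit map $S_n(\vtr{v}) = \vtr{v}\cdot(X_{n-1},\ldots,X_{n-k+1}) \pmod{X_n}$ of Definition \ref{def:Sn}: Lemma \ref{lem:S_n} shows $S_n\bigl(\sum_{i=1}^{n-1} a_i\vtr{X}_{-i}\bigr) = \sum_{i=1}^{n-1} a_i X_{n-i}$, so two distinct $\vtr{c}$-SRs of the same vector would yield two distinct legal decompositions of the same integer, contradicting the known uniqueness of legal decompositions for scalar PLRS's. Your counting/fundamental-domain variant could plausibly be made rigorous (compare $|D_n| = X_{n+1}$ in the paper), and the greedy/lexicographic alternative is also a reasonable route, but neither is carried out: as written you identify the right ingredients without supplying either the termination monovariant or the explicit linear map on which the actual proof rests.
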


\begin{definition}\label{Def: weakly dec}
    We call a vector $\vtr{c}=(c_1,c_2,\ \ldots,\ c_k)$  \textit{\textbf{weakly decreasing}} if for each $1\leq n\leq k-1$ we have $c_n\geq c_{n+1}$.
\end{definition}

We are now ready to state our main result, which generalizes Theorem \textcolor{blue}{\ref{thm: ABJ}} to weakly decreasing $\vtr{c}$-recurrence vectors.

\begin{theorem}\label{thm: main}
    If $\vtr{c}=(c_1,c_2,\ \ldots, \ c_k)$ is weakly decreasing and $c_k=1$ then every $\vtr{v}\in \mathbb{Z}^{k-1}$ has a unique representation of $\vtr{c}$-SR.
\end{theorem}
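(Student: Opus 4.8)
The plan is to prove existence and uniqueness separately, with existence handled constructively via the carrying/borrowing operations hinted at in the introduction, and uniqueness handled by reducing to the scalar PLRS case through the structure of the recurrence vectors.

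\textbf{Existence.} Given $\vtr{v}\in\mathbb{Z}^{k-1}$, I would first produce \emph{some} finite representation $\vtr{v}=\sum_{n=1}^m a_n\vtr{X}_{-n}$ with nonnegative integer coefficients (not necessarily a $\vtr{c}$-SR). This is easy: since $\vtr{X}_{-1},\ldots,\vtr{X}_{-(k-1)}$ are the standard basis vectors, any $\vtr{v}$ is trivially a (possibly signed) combination of them; to clear negative entries one uses the recursion \eqref{eq:2} to rewrite, say, $-\vtr{e}_i$ in terms of later vectors $\vtr{X}_{-n}$ with $n\geq k$, which has a positive leading coefficient $c_1\geq 1$ and nonnegative remaining coefficients. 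Then I would run a normalization algorithm: whenever the coefficient string contains a forbidden configuration — an entry $a_n$ that, together with its predecessors, realizes a copy of the pattern $c_1,c_2,\ldots,c_k$ (a "carry"), or more generally violates the recursive legality of Definition \ref{defintion:SR} — apply the identity $c_1\vtr{X}_{-n}+\cdots+c_k\vtr{X}_{-(n+k-1)}=\vtr{X}_{-(n-1)}$ to push weight to lower indices, and symmetrically use "borrowing" to fix entries. The key point, and the reason $c_k=1$ and weak monotonicity are imposed, is that this process \emph{terminates}: I would attach a monovariant — most naturally something like the value of $\sum_n a_n Y_n$ for the scalar $\vtr{c}$-recurrence sequence $(Y_n)$ evaluated appropriately, or a lexicographic measure on the coefficient string — that strictly decreases (or decreases in a well-founded order) under each carry, with borrowing steps controlled because $c_k=1$ forces the borrowed block to have bounded, predictable size. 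Termination then yields a string satisfying all four conditions of Definition \ref{defintion:SR}, i.e. a $\vtr{c}$-SR.

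\textbf{Uniqueness.} Here I would exploit the bijection with the scalar PLRS world mentioned in the abstract. Observe that the map sending a coefficient string $(a_n)$ to the integer $\sum_n a_n X_{-n}^{\mathrm{scalar}}$ is \emph{not} directly available (negative indices), so instead I would argue as follows. Suppose $(a_n)_{n=1}^m$ and $(b_n)_{n=1}^{m'}$ are two $\vtr{c}$-SRs of the same $\vtr{v}$. By shifting indices, for large $N$ both become $\vtr{c}$-SRs (legal decompositions in the usual scalar sense) of the \emph{integers} $A=\sum_n a_n X_{N-n}$ and $B=\sum_n b_n X_{N-n}$ with respect to the scalar sequence $(X_n)$; and $\vtr{v}=\sum a_n\vtr{X}_{-n}=\sum b_n\vtr{X}_{-n}$ translates, after multiplying by $\vtr{X}_N$ and using that the vector recurrence is driven by the same coefficients as the scalar one, into a relation forcing $A$ and $B$ to be congruent — in fact equal — in a way that lets me invoke the scalar uniqueness theorem of Miller–Wang (legal decompositions of a fixed integer over a PLRS are unique). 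The precise bridge is that the components of $\vtr{X}_n$ are themselves shifted scalar $\vtr{c}$-recurrence sequences with the standard basis as initial data, so equality of the vector combinations is equivalent to equality of $k-1$ independent scalar combinations of a PLRS, each of which has a unique legal representation; matching them forces $(a_n)=(b_n)$.

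\textbf{Main obstacle.} The hard part will be the termination argument for the normalization algorithm, precisely the interaction between carrying and borrowing. A naive monovariant that decreases under carries may increase under borrows, and vice versa; making this rigorous requires a carefully chosen well-founded measure (likely a lexicographic combination: primary coordinate the position of the highest "defect", secondary the local excess there) together with a proof that carries do not propagate indefinitely leftward and borrows not indefinitely rightward. The hypothesis $c_k=1$ is what keeps the borrow blocks of length exactly $k$ with unit carry-in, and weak monotonicity $c_1\geq\cdots\geq c_k$ is what guarantees each local carry strictly reduces the measure rather than merely relocating it; I expect the bulk of the proof to be devoted to formalizing this and to the two illustrative failures (Remark \ref{rem: c_k=1} and Example \ref{Ex: Bad Algorthm}) showing the hypotheses cannot simply be dropped. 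A secondary subtlety is handling the boundary cases $m<k$ and the recursive "reset" at an index $s$ with $a_s<c_s$ in Definition \ref{defintion:SR}, which must be threaded through both the existence algorithm's output and the uniqueness induction.
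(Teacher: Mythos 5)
Your overall strategy matches the paper's in spirit (normalize via carrying/borrowing for existence, reduce to the scalar PLRS uniqueness theorem via shifted scalar sums for uniqueness), but there is a genuine gap exactly where you predict the difficulty lies: the termination argument is absent, and the first monovariant you float cannot work. Both carrying and borrowing implement the defining recurrence identity, so any weighted sum of the form $\sum_n a_n Y_n$ with $(Y_n)$ satisfying that recurrence is \emph{invariant} under the two operations, not decreasing; this is precisely why the represented vector is preserved. The paper's actual measure is a two-tier one built from the coefficient sums of Definition \ref{Def: sums of coefs}: each iteration either strictly decreases the total sum $G(a)$ by $\sum_i c_i - 1$ (the end-complete case, handled by pure carries in Proposition \ref{prop:End complete}), or leaves $G$ unchanged while strictly increasing the partial sum $G_{n_p}(a)$ below the first overfilled element, which is bounded above by $G$; combining these bounds the number of iterations (Claims 1 and 2 of Proposition \ref{prop: general cSR}). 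Your ``lexicographic combination'' guess points in the right direction, but without identifying this specific quantity (and proving that a borrow at the first overfilled position can always be followed by carries, which is exactly where weak monotonicity enters via $d^0_{n_{p_0}+j_0+i}\geq c_i\geq c_{j_0+i+1}$), the proof is not there -- and Example \ref{Ex: Bad Algorthm} shows the danger is real.

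A second, smaller divergence: you propose to normalize an \emph{arbitrary} nonnegative representation obtained by clearing negative entries, whereas the paper only ever needs to normalize $\vtr{c}$-NSRs, because existence is proved by building $\vtr{v}$ up from $\vtr{0}$ one basis vector (or one copy of $\vtr{X}_{-k}$) at a time, so each intermediate string is a $\vtr{c}$-SR with a single coefficient incremented by one. The chunk/first-overfilled-element machinery, and Claim 1 in particular, lean on this NSR structure; your broader normalization claim is plausibly true but would demand a stronger termination argument than the paper gives, so you should either adopt the incremental induction or prove more. For uniqueness your sketch is essentially the paper's: rather than ``multiplying by $\vtr{X}_N$,'' the precise bridge is the scalar product $S_n(\vtr{v})=\vtr{v}\cdot(X_{n-1},\ldots,X_{n-k+1})\pmod{X_n}$ of Definition \ref{def:Sn}, under which a $\vtr{c}$-SR maps to a legal scalar decomposition in $[0,X_n)$ (Lemma \ref{lem:S_n}), and uniqueness of legal PLRS decompositions finishes; your alternative of comparing the $k-1$ component sequences is shakier, since those components are not themselves PLRS's with the initial data needed to quote the scalar theorem directly.
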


% Construction of CHUNKS - Sebastian
 To prove this result, we decompose terms of strings \textit{``close''} to $\vtr{c}$-SR's into groups and process them one by one. Thankfully, the definition of a $\vtr{c}$-SR naturally groups coefficient terms into \textit{``chunks''} that can be separately examined as $\vtr{c}$-SR's themselves. Indeed, the definition ensures that when reading the string of coefficients from left to right one has distinct ``\textit{partially completed}'' copies of $\vtr{c}$ followed by zeros. We formalize this notion of \textit{chunks} in the next definition.

\begin{definition}\label{Def: Chunks}
    Suppose that $a:=(a_n)_{n=1}^\infty$ is a $\vtr{c}$-SR and $m\in \mathbb{N}$ is the largest coefficient such that $a_m>0$.
    Let $n_1:=1$; this is the \textbf{\textit{first element of the first chunk of $a$}}.\\
 
    \noindent As $(a_n)_{n=n_1}^\infty$ is a $\vtr{c}$-SR, there exists $s_1 \in \{1,\ \ldots, \ k\}$ such that
    \[
    a_{n_1}\ =\ c_1,\ \ldots,\ a_{n_1 +(s_1-1) -1}\ =\ c_{s_1-1}, \text{ and } a_{n_1+s_1-1}\ <\ c_{s_1}. 
    \]
    Let $A_1 := \{r \geq n_1+s_1 : a_r \neq 0 \}$. If $A_1$ is a nonempty set we define the \textbf{\textit{first element of the second chunk}} as $n_2:=\min A_1$. Note that, by the definition of a $\vtr{c}$-SR, $(a_n)_{n=n_2}^\infty$ is a $\vtr{c}$-SR as well.\\
    
    \noindent Recursively define $A_i:=\{r\geq n_i + s_i : a_r \neq 0 \}$. If this is a nonempty set, \textbf{\textit{the first element of the $(i+1)$\textsuperscript{\rm st} chunk}} is defined by $n_{i+1}:=\min A_i$.\\
    
    As $s_i \geq 1$ it follows that $n_{i+1} \geq n_i+s_i > n_i$ which implies that the $n_i$'s are distinct. Hence, as it is clear that each $n_i\leq m$, we can insure that this process must terminate and only a finite number of $n_i$'s arise, say $(n_i)_{i=1}^\ell$.\\ 
    
    After this process completes we define $\ell$ to be the \textit{\textbf{number of chunks}} of the representation, and we denote it by \emph{$CH(a)$}. Let $n_{\ell+1}=m+1$, and for each $1\leq i\leq \ell$ refer to the string $a_{n_i}a_{n_i+1}\dots  a_{n_{i+1}-1}$ as the \textbf{$i$\textsuperscript{\rm th} \textit{chunk} of $a$}. 

\end{definition}

Following the ideas of \textcolor{blue}{\cite{ABJ11}}, to establish the existence of $\vtr{c}$-SR for each $\vtr{v}\in \Z^{k-1}$ we manipulate representations built by adding $1$ to a single coefficient of a $\vtr{c}$-SR.

% Credit: Sebastian
\begin{definition}
\label{defintion:NSR}
    A \textbf{$\vtr{c}$-nearly satisfying representation ($\vtr{c}$-NSR) for $\vtr{v} \in \mathbb{Z}^{k-1}$} is a sequence of nonnegative integers $(a_n)_{n=1}^\infty$ such that $\vtr{v}=\sum_{n=1}^\infty a_n \X_{-n}$ and where there exists an integer $i\in \N $ for which the following hold. 
        \begin{itemize}
            \item The sequence $(a_n)_{n=1}^\infty$ is not a $\vtr{c}$-SR.
            \item The sequence $(b_n)_{n=1}^\infty$ defined by
            
            $$b_n\ =\ \begin{cases}
                a_n-1, & \text{if }n=i; \\
                a_n, & \text{otherwise}
            \end{cases}$$

            \noindent is a $\vtr{c}$-SR.
        \end{itemize}
        
\end{definition}

As in the case of $\vtr{c}$-SR's we refer to finite sequences $(a_n)_{n=1}^m$ as a $\vtr{c}$-NSR if the sequence $(b_n)_{n=1}^\infty$ defined by

    \[b_n\ :=\ \begin{cases}
        a_n ,& \text{if }1\leq n\leq m; \\
        0, & \text{otherwise}
    \end{cases}\]

    \noindent is a $\vtr{c}$-NSR. If $a=(a_n)_{n=1}^\infty$ is a $\vtr{c}$-NSR, define
    \begin{align*}
    I(a)\ &: =\ \begin{cases}
        \max\{j :(a_n)_{n=1}^{j-1}\text{ is a $\vtr{c}$-SR} \}, & \text{if } (a_n)_{n=1}^1\ \text{is a }\vtr{c}\text{-SR};\\
        1, & \text{otherwise}
    \end{cases}
    \end{align*}
   
    \noindent and call $I(a)$ \textbf{\textit{the first overfilled element of $a$}}. A $\vtr{c}$-NSR, say $(a_n)_{n=1}^m$, is \textit{\textbf{end complete}} if $(b_n)_{n=1}^m$ defined by 

     \[b_n\ =\ \begin{cases}
        a_n, & \text{if }1\leq n\leq m-1; \\
        a_n-1, & \text{if }n=m
    \end{cases}\]
 is a $\vtr{c}$-SR with $\ell$ chunks such that the $\ell^{\rm th}$ chunk takes the form $b_{n_\ell}b_{n_{\ell}+1}\dots b_{n_\ell+k-1}$ and where

    $$b_{n_\ell}\ =\ c_1,\ b_{n_{\ell}+1}\ =\ c_2,\ \ldots,\ b_{n_\ell+k-1}\ =\ c_k-1.$$

To prove our main result we manipulate a given $\vtr{c}$-NSR of a vector $\vtr{v}$ using operations that, when each is performed, give a representation of $\vtr{v}$.

% Credit: Sebastian created & wrote formal def, Alan typed the informal-simplified definition

\begin{definition}\label{definition:carrying}
    Let $\vtr{v} \in \mathbb{Z}^{k-1} $ be any vector. We call a \textbf{$\vtr{c}$-representation of $\vtr{v}$} a sequence $(a_n)_{n=1}^\infty$ such that $\vtr{v} = \sum_{n=1}^\infty a_n \X_{-n}$ and where there exists an $m\in \N$ such that $a_n=0$ for all $n>m$. Given $i\in \N$ and a $\vtr{c}$-representation of $\vtr{v}$, say $(a_n)_{n=1}^\infty$, we define two processes for obtaining new $\vtr{c}$-representation of $\vtr{v}$. 
\begin{itemize}
    \item \textbf{Carrying into $a_i$} gives us a new sequence $(b_n)_{n=1}^\infty$ defined by
    \[
    b_j\ =\
    \begin{cases}
    a_j, & \text{if } j < i \text{  or  } j > i+k; \\
    a_i + 1, & \text{if } j = i; \\
    a_j - c_l, & \text{if } i < j \leq i+k \text{ and } j = i+l. 
    
    \end{cases}
    \]

    \item \textbf{Borrowing from $a_i$} gives us a new sequence $\{d_n\}_{n=1}^\infty$ defined by
    \[
    d_n\ =\ 
    \begin{cases}
        a_n, & \text{if } n < i \text{ or } n > i+k;\\
        a_i-1, & \text{if } i = n; \\
        a_n + c_l, & \text {if } i < n \leq i+k \text{ and } n = i+l. 
    \end{cases}
    \]
\end{itemize}

Note that, due to the $\vtr{c}$-recurrence, in either case $ \vtr{v}  = \sum_{n=1}^\infty b_n \vtr{X}_{-n} = \sum_{n=1}^\infty d_n \vtr{X}_{-n}$.
\end{definition}

More informally,
    \begin{enumerate}
        \item[(1)] \emph{carrying into} $a_i$ increments $a_i$ by 1 and decrements $a_{i +j}$ by $c_j$ for each $j = 1, 2, \ \ldots, \ k$; and 
        \item[(2)] \emph{borrowing from} $a_i$ decrements $a_i$ by 1 and increments $a_{i + j}$ by $c_j$ for each $j = 1, 2, \ \ldots, \ k$.
    \end{enumerate}

\begin{table}[htbp]
  \centering
  \vspace{.5cm}
  \begin{tabular}{|>{\centering\arraybackslash}m{0.8cm}|>{\centering\arraybackslash}m{2.7cm}|>{\centering\arraybackslash}m{6cm}|>{\centering\arraybackslash}m{1.3cm}|}
    \hline
    \textbf{Step} & \textbf{Operation} & \textbf{Representation} & \textbf{Vector} \\ 
    \hline
    1 & Initial $\vtr{c}$-SR & $2\cdot\vtr X_{-2} + \vtr X_{-3}$ & $(-2,1)$ \\ 
    \hline
    2 & Add $\vtr X_{-3}$ & $2\cdot\vtr X_{-2} + 2\cdot\vtr X_{-3}$ & $(-4,0)$ \\
    \hline
    3 & Borrow from $c_3$ & $2\cdot\vtr X_{-2} + \vtr X_{-3} + 2\cdot\vtr X_{-4} + \vtr X_{-5} + \vtr X_{-6}$ & $(-4,0)$ \\ 
    \hline
    4 & Carry into $c_1$ & $\vtr X_{-1} + \vtr X_{-4} + \vtr X_{-5} + \vtr X_{-6}$ & $(-4,0)$ \\ 
    \hline
    5 & Final $\vtr{c}$-SR & $\vtr X_{-1} + \vtr X_{-4} + \vtr X_{-5} + \vtr X_{-6}$ & $(-4,0)$ \\ 
    \hline
  \end{tabular}
  \caption{Illustration of carrying/borrowing operations for $\vtr{v}=(-2,1)$ with recurrence $\vtr X_n = 2\vtr X_{n-1} + \vtr X_{n-2} + \vtr X_{n-3}$. }\label{tab:carry-borrow}

\end{table}

To avoid negative coefficients, we only carry into $a_i$ when $a_{i+j} \geq c_j$ for each $j = 1, 2,\ \ldots,\ k$ and we only borrow from strictly positive $a_i$. Table \textcolor{blue}{\ref{tab:carry-borrow}} shows how carrying and borrowing can be quite useful in going from a $\vtr{c}$-NSR representation of $\vtr{v}$ to a $\vtr{c}$-SR representation of $\vtr{v}$.

%Phrasing
 In proving our main result we prove that a $\vtr{c}$-NSR can be transformed into $\vtr{c}$-SR by a finite number of borrowing and carrying operations. In showing that our underlying algorithm that accomplishes this eventually terminates, it is useful to define functions that count the sum of all or of a subset of the coefficients on an eventually zero, infinite string.
% Credit: Tianyu

\begin{definition}\label{Def: sums of coefs}
    For each sequence of nonnegative integers $a:=(a_n)_{n=1}^\infty$ where there exists an $m\in \N$ such that $a_{n}=0$ for all $n>m$, we define $G(a)$ as the sum of elements of $a$,
\begin{equation}
    G(a)\ :=\ \sum_{i=1}^{m}a_i,
\end{equation} 
and for each $n\in \N$ define $G_n(a)$ as the sum of every term of index less than $n$,

\begin{equation}
    G_n(a)\ :=\ \sum_{i=1}^{n-1}a_i.
\end{equation} 

\end{definition}

\begin{example}{\label{Ex: Sum of ceof}}
    Given the string $a=21012100 \dots$, we have $G(a)=2+1+0+1+2+1+0=7$ and $G_4(a)=1+2+1+0=4$.
\end{example}
%\newpage
\begin{remark}\label{Rem:sumofcoefs bor/car}
    Note that for a $\vtr{c}$-representation of $\vtr{v}$, say $a:=(a_n)_{n=1}^\infty$, with $G(a)=R$, if $b$ and $d$ are the $\vtr{c}$-representations of $\vtr{v}$ resulting from carrying into some $a_n$ and borrowing from some $a_n$ respectively, then $G(b)=R - \sum_{i=1}^m c_i + 1$ and $G(d)=R + \sum_{i=1}^kc_i -1$.
\end{remark}

\section{Proofs of Main Results}

There is a natural map that transforms linear combinations of truncated $\vtr{c}$-recurrence vectors into the underlying $\vtr{c}$-recurrence sequences. 

% Credit: Erica created, Alan edited, Tianyu modified
\begin{definition}\label{def:Sn}
    For $n\geq k-2,S_n:\mathbb{Z}^{k-1}\xrightarrow{}[0,X_n)$ is the scalar product defined by
    \[S_n(\vtr{v})\ =\ \vtr{v}\cdot(X_{n-1},\ \ldots,\ X_{n-k+1})\pmod{X_n}\]
    where each $X_n$ is defined as in Definition 1.1.
\end{definition}

% Credit: Erica created, Alan edited and rewrote proof, Tianyu edited and rewrote proof

The following lemma follows by the same argument as in \textcolor{blue}{\cite{ABJ11}}. For the sake of completeness and to highlight our specific case, its proof is included below.

\begin{lemma}
\label{lem:S_n}
  We have  $S_n(\sum_{i=1}^{n-1}a_i\vtr{X}_{-i}) = \sum _{i=1}^{n-1}a_iX_{n-i}$.
\end{lemma}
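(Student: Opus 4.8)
The plan is to prove the identity $S_n\bigl(\sum_{i=1}^{n-1}a_i\vtr{X}_{-i}\bigr) = \sum_{i=1}^{n-1}a_iX_{n-i}$ by combining two facts: first, that $S_n$ is linear as a map on $\mathbb{Z}^{k-1}$ (being the composition of a fixed dot product with reduction modulo $X_n$, which respects addition and scalar multiples mod $X_n$); and second, that on each individual basis-type vector $\vtr{X}_{-i}$ for $1 \le i \le k-1$ we have $S_n(\vtr{X}_{-i}) \equiv X_{n-i} \pmod{X_n}$. The latter is immediate from the definitions: since $\vtr{X}_{-i} = \vtr{e}_i$, the dot product $\vtr{e}_i \cdot (X_{n-1}, \ldots, X_{n-k+1})$ picks out exactly the $i$-th coordinate, which is $X_{n-i}$.

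The subtlety — and the step I expect to be the main obstacle — is that the sum $\sum_{i=1}^{n-1} a_i \vtr{X}_{-i}$ ranges over $i$ up to $n-1$, not just up to $k-1$, so it involves vectors $\vtr{X}_{-i}$ with $i \ge k$, which are not basis vectors but are determined by the backward recurrence in Equation~\eqref{eq:2}. So I would argue by induction (or directly, by expanding each $\vtr{X}_{-i}$ with $i \ge k$ via the recurrence) that $S_n(\vtr{X}_{-i}) = X_{n-i} \bmod X_n$ holds for \emph{all} $1 \le i \le n-1$, where $X_{n-i}$ for $i$ in this range refers to the scalar $\vtr{c}$-recurrence sequence extended appropriately (note $n-i$ can be as small as $1$, and the scalar sequence $(X_m)$ is defined for $m \ge 1$, with $X_n$ as the modulus when $i$ would push $n-i \le 0$). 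The cleanest route is: since both $\bigl(\vtr{X}_{-i}\bigr)$ and $\bigl(X_{n-i} \bmod X_n\bigr)$ satisfy the same linear recurrence relation with coefficients $\vtr{c}$ (the vector one by Definition~\ref{Def: Rec Vectors}(3) read backwards, i.e.\ Equation~\eqref{eq:2}; the scalar one because the $\vtr{c}$-recurrence for $X_m$ holds for $m > k$ and, crucially, $X_n \equiv c_1 X_{n-1} + \cdots + c_k X_{n-k} \pmod{X_n}$ gives $0 \equiv c_1 X_{n-1} + \cdots + c_k X_{n-k}$, which lets the recurrence be run backwards modulo $X_n$ as well), and since they agree on the $k-1$ initial indices $i = 1, \ldots, k-1$ (and one checks the $i=0$ or boundary case $\vtr{X}_0 = \vtr{0} \mapsto X_n \equiv 0$), they agree for all relevant $i$.

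Having established $S_n(\vtr{X}_{-i}) \equiv X_{n-i} \pmod{X_n}$ for every $1 \le i \le n-1$, I would finish by applying linearity of $S_n$ modulo $X_n$:
\[
S_n\Bigl(\sum_{i=1}^{n-1} a_i \vtr{X}_{-i}\Bigr) \ \equiv\ \sum_{i=1}^{n-1} a_i\, S_n(\vtr{X}_{-i}) \ \equiv\ \sum_{i=1}^{n-1} a_i X_{n-i} \pmod{X_n},
\]
and since $S_n$ by definition takes values in $[0, X_n)$, this congruence pins down $S_n\bigl(\sum a_i \vtr{X}_{-i}\bigr)$ exactly as the stated reduction. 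The only bookkeeping to be careful about is the reduction $\pmod{X_n}$ throughout — the right-hand side $\sum_{i=1}^{n-1} a_i X_{n-i}$ should itself be understood modulo $X_n$ (or the identity should be read as an equality of residues), and I would state that convention explicitly at the start to match Definition~\ref{def:Sn}. I also need the mild hypothesis $n \ge k-2$ (or as needed) so that all the indexing in Definition~\ref{def:Sn} makes sense; this is already assumed in the statement of the lemma via the domain of $S_n$.
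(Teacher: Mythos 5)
Your proposal is correct and follows essentially the same route as the paper: verify $S_n(\vtr{X}_{-i}) = X_{n-i}$ on the basis vectors $i = 1, \ldots, k-1$, extend to all $i \le n-1$ by running the recurrence backwards modulo $X_n$ (using $X_n \equiv 0$, exactly the substitution the paper makes explicitly in its $i=k$ and $i=k+1$ computations before invoking induction), and finish by linearity of $S_n$. Your explicit remark that the identity is to be read modulo $X_n$ is a reasonable clarification consistent with Definition \ref{def:Sn}.
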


% Credit: Tianyu
\begin{proof}
    When $1 \leq i \leq k - 1$, we have $\vtr{X}_{-i} = \vtr{e}_i$, by definition. Therefore,
    \[S_n(\X_{-i})\ =\ X_{n - i}\pmod{X_n}\ =\ X_{n-i}.\] 
    
    \noindent When $i = k$, 
    \begin{align*}
        S_n(\X_{-k}) &\ =\ \X_{-k}\cdot(X_{n-1},\ \ldots,\ X_{n-k+1})\pmod{X_n} \\
                     &\ =\ \frac{\vtr{X}_{0} - \sum_{j =1}^{k - 1}c_j\vtr{X}_{- j}}{c_k}\cdot(X_{n-1},\ \ldots,\ X_{n-k+1})\pmod{X_n} \\
                     &\ =\ \frac{0 - \sum_{j =1}^{k - 1}c_jX_{n- j}}{c_k}\pmod{X_n} \\
                     &\ =\ \frac{X_{n} - \sum_{j =1}^{k - 1}c_jX_{n- j}}{c_k}\pmod{X_n} \\
                     &\ =\ X_{n-k}\pmod{X_n}\\
                     &\ =\ X_{n-k}, 
    \end{align*}
 and when $i = k+1$,
    \begin{align*}
        S_n(\X_{-k-1}) &\ =\ \X_{-k-1}\cdot(X_{n-1},\ \ldots,\ X_{n-k+1})\pmod{X_n} \\
                     &\ =\ \frac{\vtr{X}_{-1} - \sum_{j =1}^{k - 1}c_j\vtr{X}_{-1-j}}{c_k}\cdot(X_{n-1},\ \ldots,\ X_{n-k+1})\pmod{X_n} \\
                     &\ =\ \frac{X_{n-1} - \sum_{j =1}^{k - 1}c_jX_{n-1-j}}{c_k}\pmod{X_n} \\
                     &\ =\ X_{n-k-1}\pmod{X_n}\\
                     &\ =\ X_{n-k-1}. 
    \end{align*}
 When $k+1 < i \leq n-1$, according to the conditions previously obtained, it can be concluded through induction that
    \[S_n(\X_{-i})\ =\ X_{n - i}.\]
    Therefore, it's true for all $1 \leq i \leq n-1$ that $S_n(\X_{-i}) = X_{n - i}$. By linearity of $S_n$, the proof is complete.
\end{proof}

% credit sebastian
\begin{proposition}\label{prop:End complete}
    Each end complete $\vtr{c}$ -NSR can be transformed into a $\vtr{c}$-SR by a finite sequence of carrying operations. Furthermore, by Remark \textcolor{blue}{\ref{Rem:sumofcoefs bor/car}}, this process only reduces the sum of the coefficients.
\end{proposition}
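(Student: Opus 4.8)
The plan is to induct on the last (highest-index) chunk of the underlying $\vtr{c}$-SR that an end complete $\vtr{c}$-NSR sits above. Let $a=(a_n)_{n=1}^m$ be an end complete $\vtr{c}$-NSR for $\vtr{v}$, so that subtracting $1$ from $a_m$ produces a $\vtr{c}$-SR $b$ whose final chunk is a \emph{full} copy $c_1,c_2,\ldots,c_k-1$ occupying indices $n_\ell,\ldots,n_\ell+k-1=m$. Then $a$ itself has, at indices $n_\ell,\ldots,m$, the string $c_1,c_2,\ldots,c_{k-1},c_k=1$, i.e.\ a literal copy of $\vtr{c}$. The key move is to carry into $a_{n_\ell-1}$: since the coefficients at $n_\ell,n_\ell+1,\ldots,n_\ell+k$ are $c_1,c_2,\ldots,c_{k-1},c_k,0$ (the trailing $0$ because $m$ is the last nonzero index), after carrying they become $0,0,\ldots,0,-0=0$ — more precisely each $a_{n_\ell+l}$ drops by $c_l$, killing the whole copy of $\vtr{c}$, while $a_{n_\ell-1}$ increases by $1$. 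This is exactly the condition ``$a_{i+j}\ge c_j$ for all $j$'' required to carry without creating negative coefficients, so the operation is legal, and by Definition~\ref{definition:carrying} it preserves $\vtr{v}$.

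After this single carry, the new string $a'$ agrees with $b$ except that $a'_{n_\ell-1}=b_{n_\ell-1}+1$ and the former last chunk is gone; thus $a'$ is a $\vtr{c}$-representation that is either already a $\vtr{c}$-SR or is again a $\vtr{c}$-NSR — and, crucially, one whose overfilled coefficient sits at a strictly smaller index $n_\ell-1$, hence inside a chunk of $b$ with strictly smaller chunk-number. One must check that $a'$ is still \emph{end complete} (or handle the base case directly): the relevant observation is that incrementing $a_{n_\ell-1}$ either (i) leaves a legal $\vtr{c}$-SR, in which case we are done, or (ii) overfills position $n_\ell-1$ by exactly $1$ relative to a $\vtr{c}$-SR, and because position $n_\ell-1$ was the last position of chunk $\ell-1$ of $b$ — which by Definition~\ref{Def: Chunks} ends the partially-completed copy of $\vtr{c}$ at that chunk — bumping it by one completes that copy of $\vtr{c}$ there, so $a'$ is again end complete with last chunk one earlier. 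Iterating, the chunk index of the overfill strictly decreases, so after at most $CH(b)\le m$ carries the process halts at a genuine $\vtr{c}$-SR. Finally, Remark~\ref{Rem:sumofcoefs bor/car} gives that each carry changes the coefficient sum by $1-\sum_{i=1}^k c_i\le 0$ (as $\sum c_i\ge c_k=1$, with equality only in degenerate cases), so the sum of coefficients is non-increasing throughout; this establishes the ``Furthermore'' clause.

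The main obstacle I anticipate is the bookkeeping in step two: verifying rigorously that after carrying into $a_{n_\ell-1}$ the resulting string is \emph{again} an end complete $\vtr{c}$-NSR (rather than some string with an overfill pattern not covered by the definition), and in particular that the overfill really does land at the \emph{last} index of the preceding chunk so that the ``completed copy of $\vtr{c}$'' structure in the definition of end complete is reproduced. This hinges on the weakly-decreasing hypothesis on $\vtr{c}$ and on the precise chunk structure: because $b$'s chunk $\ell-1$ had the form $c_1,\ldots,c_{s-1},a_s,0,\ldots,0$ with $a_s<c_s$, and $n_\ell-1$ is its final index, incrementing there either produces $a_s+1\le c_s$ (still a valid prefix of $\vtr{c}$, possibly triggering no overfill) or, when $a_s+1=c_s$ and the chunk had length exactly $k$ with the last entry being $c_k-1$, exactly reproduces the end complete pattern one chunk down. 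Carefully enumerating these sub-cases — and noting the degenerate possibility that chunk $\ell-1$ does not exist, i.e.\ $n_\ell=1$, where the carry simply places a new leading term and the process terminates — is the technical heart of the argument; everything else is a direct appeal to Definition~\ref{definition:carrying} and Remark~\ref{Rem:sumofcoefs bor/car}.
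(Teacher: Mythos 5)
Your proposal is correct and takes essentially the same route as the paper: carry into the coefficient immediately preceding the terminal full copy of $\vtr{c}$, verify that the result is either a $\vtr{c}$-SR or again an end complete $\vtr{c}$-NSR with one fewer chunk, and induct on the number of chunks, with the claim about coefficient sums coming from Remark \textcolor{blue}{\ref{Rem:sumofcoefs bor/car}}. The only cosmetic difference is the degenerate case $n_\ell=1$, which the paper handles by carrying into an auxiliary coefficient $a_0$ attached to $\vtr{X}_0=\vtr{0}$ (the precise version of your ``new leading term'').
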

\begin{proof}
     We proceed by induction over the number of chunks of the representation. Let $a:=(a_n)_{n=1}^m$ be an end complete $\vtr{c}$-NSR with only 1 chunk and define $a_0=0$. Note that as $\vtr{X}_0=\vtr{0}$ we can extend $a$ to $(a_n)_{n=0}^m$. In this case, note that $m = k$ and
    \[ 
      a_1\ =\ c_1 , \ \ldots ,\ a_k\ =\ c_k.
    \]
    Then, by carrying into $a_0$, we obtain the sequence $(b_n)_{n=0}^m$, where $b_0=1$ and $b_i=0$ for all $1\leq i\leq m$. Note that that $(b_n)_{n=1}^m$ is the zero vector which is, indeed, a $\vtr{c}$-SR.\\
    
    Suppose that all end complete $\vtr{c}$-NSR's with $\ell$ chunks can be transformed into a $\vtr{c}$-SR. Let $(a_n)_{n=1}^m$ be an end complete $\vtr{c}$-NSR with $\ell+1$ chunks. For each $1\leq i\leq \ell+1$ denote $a_{n_i}$ as the first coefficient of the $i^{\rm th}$ chunk as in Definition \textcolor{blue}{\ref{Def: Chunks}}. By assumption we can carry into $a_{n_{\ell+1} -1}$. Let $b$ be the resulting sequence of such carrying operation; then, we have exactly two cases to consider. 
            \begin{description}
                \item[Case 1:] $b$ is a $\vtr{c}$-SR and we are done, or\\
                
                \item[Case 2:] $b$ is an end complete $\vtr{c}$-NSR and the desired result follows by induction.
            \end{description}

     \noindent Indeed, suppose that Case 1 does not hold; then by Definition \textcolor{blue}{\ref{defintion:SR}} there must exist $s\in \{1, \ \ldots, k \} $ such that
     $$a_{n_\ell} \ =\ c_1, \ \ldots, \ a_{n_\ell -1 + s -1} \ = \ c_{s-1}, \  a_{n_\ell -1 + s} < c_s $$
 and there exists $p \geq 0$ such that $a_{n_\ell -1 +s + i }  = 0$, for all $i\leq p$, and $n_\ell -1 + s + p  =  n_{\ell +1}-1$. Notice that, because of Remark \textcolor{blue}{\ref{Rem:sumofcoefs bor/car}}, $ b_{q}  =  0$ for all $q\geq n_{\ell +1}$. If $p=0$ then $n_{\ell+1} - 1 = n_\ell + s-1$, and so $b_{n_\ell -1 + s} \leq c_s$; otherwise $b$ is exactly
     %still the spacing, same as above
    \[
        (a_1,\ \ldots, \ a_{n_\ell}\ =\ c_1,\ a_{n_\ell +1}\ =\ c_2,\ \ldots,\ a_{n_\ell-1  +s}\ <\ c_s, 0, \ \ldots,\ 0,\  b_{n_{\ell+1} -1} = 1)
    \]
    which is clearly a $\vec{c}$-SR.
     Hence, the only possible way for $b$ not to be a $\vec{c}$-SR is that  $s = k$ and $b_{n_\ell -1 + s} = c_k$, in other words $b$ is end complete.
\end{proof}

We now prove that weakly decreasing $\vtr{c}$-NSR's can be transformed into satisfying representations. We note that the algorithm utilized in the proof of this result is similar to the algorithm from the appendix of \textcolor{blue}{\cite{KKMW11}} that converts any decomposition of an integer over a PLRS into a legal decomposition. However, as we define the vectors using a backwards recursion, it is not the case that we could eventually borrow into zero terms. In fact, it is very possible in the case where $\vtr{c}$ is not weakly decreasing for this process to never terminate (see Example \textcolor{blue}{\ref{Ex: Bad Algorthm}}).

\begin{proposition}\label{prop: general cSR}
Every $\vtr{c}$-NSR, where $\vtr{c}$ is a weakly decreasing vector and $c_k = 1$, can be transformed into a $\vtr{c}$-SR by a finite number of borrowing and carrying operations.
\end{proposition}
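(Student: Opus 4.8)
The plan is to exhibit an explicit procedure that, starting from a $\vtr{c}$-NSR $a$ of a vector $\vtr{v}$, applies borrowing and carrying operations — each of which leaves $\vtr{v}$ unchanged by Definition~\ref{definition:carrying} — until it reaches a $\vtr{c}$-SR. At each stage I locate the first overfilled element $i := I(a)$. Since $(a_n)_{n=1}^{i-1}$ is a $\vtr{c}$-SR, the index $i$ lies in a well-defined chunk in the sense of Definition~\ref{Def: Chunks}; say it begins at index $q$, so that $a_q = c_1,\, a_{q+1}=c_2,\ \dots,\ a_{i-1}=c_{i-q}$ while $a_i$ exceeds the value permitted by Definition~\ref{defintion:SR}. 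Adopting the convention $a_0 := 0$ with $\vtr{X}_0 = \vtr{0}$ (as in the proof of Proposition~\ref{prop:End complete}) to cover $q=1$, the move I perform is: if the chunk at $i$ is not already a full copy $c_1,\dots,c_k$ of $\vtr{c}$, first borrow once from $a_i$; then carry into $a_{q-1}$. If the increment at $a_{q-1}$ should complete a copy of $\vtr{c}$ immediately to its left, I continue carrying leftward. Building $\vtr{c}$-SR's of arbitrary $\vtr{v}\in\Z^{k-1}$ one standard unit vector at a time then reduces the existence half of Theorem~\ref{thm: main} to this proposition.

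The one lemma that requires the hypothesis is that this composite move is admissible, i.e. the carry into $a_{q-1}$ never creates a negative coefficient. For $a_q,\dots,a_{i-1}$ this is immediate since they equal $c_1,\dots,c_{i-q}$; for $a_i$ it holds because $a_i$ overshot its bound by at least $1$ and the preliminary borrow lowered it by exactly $1$; and for $a_{i+1},\dots,a_{q-1+k}$ — the entries the borrow has just raised by $c_1,\dots,c_{k-(i-q+1)}$ — it holds because after the borrow each such entry is at least $c_\ell$ while the carry subtracts only $c_{(i-q+1)+\ell}$, and $c_\ell \ge c_{(i-q+1)+\ell}$ precisely because $\vtr{c}$ is weakly decreasing. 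This single use of $c_\ell \ge c_{\ell+1}$ is exactly what fails in Example~\ref{Ex: Bad Algorthm}, where the analogous procedure loops forever.

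For termination I track $G(a)$ from Definition~\ref{Def: sums of coefs}. By Remark~\ref{Rem:sumofcoefs bor/car} a carry decreases $G$ by $\sigma := \sum_{j=1}^k c_j - 1 \ge 1$ while a borrow increases it by the same $\sigma$, so a move using one borrow and $t\ge 1$ carries changes $G$ by $(1-t)\sigma \le 0$ — strictly negative whenever the leftward cascade is nontrivial ($t\ge 2$) and also whenever no borrow was needed (e.g. when the chunk at $i$ was already a full copy). The moves that leave $G$ fixed are exactly those consisting of one borrow and one non-cascading carry; for these I argue that the prefix $(a_n)_{n=1}^{i}$ becomes a $\vtr{c}$-SR — nothing left of $q$ was disturbed and the corrupted chunk has collapsed to zeros — so the legality frontier $I(a)$ strictly advances, and moreover the modification is confined to indices at most $i+k$, so any new break point lies only $O(k)$ beyond the old one. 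Ordering the states by $(G(a), -I(a))$ lexicographically (equivalently, noting that an infinite run of $G$-neutral moves would drive $I(a)$ past the finite support of the representation, forcing a $\vtr{c}$-SR) then yields finiteness. Alternatively, once the representation is end complete one may finish using Proposition~\ref{prop:End complete}.

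The main obstacle I anticipate is this termination bookkeeping. Since borrowing and carrying move $G(a)$ in opposite directions by the same amount, $G$ is not a monovariant by itself; one must show that every borrow is amortized against later carries and that the $G$-neutral moves cannot recur indefinitely. Carrying this out uniformly across all local shapes a first overfilled element can take — an overlarge partial copy, a newly opened chunk, or a completed copy that triggers a leftward cascade — and verifying that the cascade cannot be re-triggered without a net decrease in $G$, is the delicate part; the rest is a direct unwinding of Definitions~\ref{defintion:SR}, \ref{defintion:NSR} and \ref{definition:carrying} together with the inequality $c_\ell \ge c_{\ell+1}$.
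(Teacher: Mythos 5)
Your algorithm is essentially the paper's: at the first overfilled element you borrow once (unless a full copy of $\vtr{c}$ has just been completed) and then carry leftward, cascading when the carry completes further copies --- the paper packages that cascade as Proposition~\ref{prop:End complete} applied to an end-complete prefix --- and your admissibility check, $c_\ell \ge c_{(i-q+1)+\ell}$ from weak decrease, is exactly the paper's Case~2 computation. The genuine gap is in termination, precisely the point you flag as delicate. First, your claim that a $G$-neutral move (one borrow, one non-cascading carry) turns the prefix $(a_n)_{n=1}^{i}$ into a $\vtr{c}$-SR, so that $I(a)$ strictly advances, is not established: the carry does disturb an entry left of $q$ (it increments $a_{q-1}$, which you only discuss when it completes a copy of $\vtr{c}$), and, more seriously, nothing in your argument bounds the residual value left at position $i$, namely $a_i-1-c_{i-q+1}$; in later iterations entries to the right of the frontier have been inflated by earlier borrows, so this residual need not be $\le c_1$, in which case the new first overfilled element is again $i$ and $I$ does not move. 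Second, even granting that $I$ advances, the order $(G(a),-I(a))$ is not obviously well-founded: the support of the representation is not fixed, since every borrow from $a_i$ creates nonzero entries out to position $i+k$ (at least $c_k=1$ there), so the frontier can chase a receding support indefinitely and ``driving $I(a)$ past the finite support'' is not forced. As written, the finiteness step is incomplete.

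The paper closes exactly this hole with a different potential: $G_{n_{p}}(a)$ from Definition~\ref{Def: sums of coefs}, the coefficient mass strictly to the left of the start of the chunk containing the frontier. Its Claim~1 shows that this chunk start never moves left and that a $G$-neutral step transfers one unit of mass (the carry into $a_{q-1}$) to the left of all subsequent frontier chunks, so $G_{n_p}$ increases by at least $1$ while being bounded above by the now-constant value of $G$; combined with the bound $\lceil G(a^0)/(c-1)\rceil$, $c=\sum_{i=1}^k c_i$, on the number of $G$-decreasing steps coming from Remark~\ref{Rem:sumofcoefs bor/car}, this yields termination without any claim about $I(a)$ or the support. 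Replacing your frontier-position measure by this mass-to-the-left potential repairs your argument; the rest of your proposal matches the paper.
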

\begin{proof}

Recall that if $\vtr{c}$ is a weakly decreasing vector then $c_{i+1} \leq c_i$ for all $i \in \{1,\ \ldots,\ k-1 \}$. 
Throughout this proof,  the symbol $a^{i}$ represents the $i$\textsuperscript{th} iteration of a process that consists of a finite number of borrows and carries.
Suppose that $a^{0}:=( a_n^{0} )_{n=1}^{\infty}$ is a $\vtr{c}$-NSR and let $c:=\sum_{i=1}^k c_i$.

By Definition \textcolor{blue}{\ref{defintion:NSR}} there exist $ p_0\geq 1$ and $0\leq j_0< k$ such that $I(a^0)=n_{p_0}+j_0$ and where
\begin{align}
    & a_{n_{p_0}}\ =\ c_1,\ \ldots,\ a_{n_{p_0} + j_0 -1}\ =\ c_{j_0},\  a_{n_{p_0}  + j_0}\ \geq \ c_{j_0+1}. \label{eq:3} 
\end{align}

Since $n_{p_0}+j_0$ is the first overfilled element of $a$, we have that $(a_n^0)_{n=1}^{n_{p_0} + j_0-1}$ is a $\vec{c}$-SR. We now proceed by case analysis to define $a^1$.\\ 

    \noindent \textbf{Case 1: $j_0=k-1$.} Then $(a_n^0)_{n=1}^{n_{p_0} + j_0}$ is end complete and so by applying Proposition \textcolor{blue}{\ref{prop:End complete}}, a finite sequence of carries transforms $(a_n^0)_{n=1}^{n_{p_0} + j_0}$ into a $\vtr{c}$-SR say $(b^{0}_n)_{n=1}^{n_{p_0} + j_0}$. Define $a^1$ by

    $$a_n^{1}\ =\ \begin{cases}
        b^0_n, & 1\leq n\leq n_{p_0}+j_0;\\
        a_n^0, & \text{otherwise.}
    \end{cases}$$

    \noindent In this case we have that $G(a^1)\leq G(a^0)-(c-1)$.\\

   \noindent \textbf{Case 2: $0\leq j_0<k-1$.} Note that in this case, $a_{n_{p_0}  + j_0}\ > \ c_{j_0+1}$. Borrow from $a_{n_{p_0}+j_0}^0$ to define $d^0$. In particular, define

    $$d^0_n\ =\ \begin{cases}
        a_n^0+c_i, & \text{if } n=n_{p_0}+j_0+i\ \text{for some }1\leq i\leq k;\\
        a_n^0-1, & \text{if } n=n_{p_0}+j_0;\\
        a_n^0 & \text{otherwise.}
    \end{cases}$$   
   
  \noindent Since $\vtr{c}$ is weakly decreasing, then for all $1\leq  i\leq k-j_0-1$ 
\begin{align*}
    &d^0_{n_{p_0} +j_0 +i}\ \geq\ c_{i}\ \geq\ c_{j_0+i+1}
 \end{align*}
and, thus,

\begin{align}
    &d^0_{n_{p_0} +j}\ \geq\ c_{j+1}\ \text{for each }0\leq j\leq k-1.
 \end{align}

\noindent Moreover, if we define $(e^0_n)_{n=1}^{n_{p_0} + k-1}$ by

$$e^0_n\ =\ \begin{cases}
        d^0_n, & \text{if  }1\leq n< n_{p_0}+j_0\ \text{ or }\ n_{p_0}+k\leq n;\\
        c_{i+1}, & \text{if  } n=n_{p_0}+i\ \text{ for some }j_0\leq i\leq k-1,
    \end{cases}$$

\noindent then $(e^0_n)_{n=1}^{n_{p_0} + k-1}$ is end complete and so by applying Proposition \textcolor{blue}{\ref{prop:End complete}}, a finite sequence of carries transforms $(e^0_n)_{n=1}^{n_{p_0} + k-1}$ into a $\vtr{c}$-SR, say $f^0:=(f^0_n)_{n=1}^{n_{p_0} + k-1}$. Define $a^1$ by

    $$a_n^{1}\ =\ \begin{cases}
        f^0_n, & 1\leq n< n_{p_0}+j_0;\\
        d^0_n-c_{i+1}, & \text{if } n=n_{p_0}+i\ \text{ for some }j_0\leq i\leq k-1;\\        d^0_n, & \text{otherwise.}
    \end{cases}$$

    \noindent In particular, $a^1$ is obtained from $a^0$ by a single borrow and then at least one carry. By Remark \textcolor{blue}{\ref{Rem:sumofcoefs bor/car}} this implies that $G(a^1)\leq G(d)-(c-1)=G(a^0)$.\\

If $a^1$ is defined in either case is a $\vtr{c}$-SR, then we are done. If not, then as above there exist $p_1\geq 1$ and $0\leq j_1< k$ such that $I(a^1)=n_{p_1}+j_1$ and where
\begin{align}
    & a_{n_{p_1}}\ =\ c_1,\ \ldots,\ a_{n_{p_1} + j_1 -1}\ =\ c_{j_1},\  a_{n_{p_1}  + j_1}\ >\ c_{j_1+1}. \label{eq:4} 
\end{align}

There are two important properties we prove.\\

\noindent \textbf{Claim 1:} We have that $n_{p_1}\geq n_{p_0}$ and if $G(a^1)=G(a^0)$ then $G_{n_{p_1}}(a^1)\geq G_{n_{p_0}}(a^0)+1$.\\

\begin{proof}[Proof of Claim 1.]
    If $a^1$ is constructed through Case 1 then $a^{1}_{n_{p_0}+k-1}=a^{0}_{n_{p_0}+k-1}-c_k=0$ and so the claim follows from the fact that $(b^0_n)_{n=1}^{n_{p_0}+j_0}=(a^1_n)_{n=1}^{n_{p_0}+j_0}$ is a $\vtr{c}$-SR. Note that in Case 1, we have $G(a^0)\neq G(a^1)$.\\

    Suppose instead that $a^1$ is constructed through Case 2. It is immediate that $n_{p_1}\geq n_{p_0}$ in the case where $n_{p_0}=1$ and so we may suppose further that $n_{p_0}\geq 2$. If $j_0=0$ then $n_{p_1}\geq n_{p_0}$ follows from the fact that $(f_n)_{n=1}^{n_{p_0}-1}=(a_n^1)_{n=1}^{n_{p_0}-1}$ is a $\vtr{c}$-SR. If $j_0\neq 0$ then $n_{p_1}\geq n_{p_0}$ follows from the fact that $a^{1}_{n_{p_0}+j_0-1}=a^{0}_{n_{p_0}+j_0-1}-c_{j_0}=0$ and since $(f_n)_{n=1}^{n_{p_0}+j_0}=(a_n^1)_{n=1}^{n_{p_0}+j_0}$ is a $\vtr{c}$-SR.

    Note that by Remark \textcolor{blue}{\ref{Rem:sumofcoefs bor/car}}, if $G(a^1)=G(a^2)$ then $a^1$ is obtained from $a^0$ by exactly one borrow from $a^0_{n_{p_0}}$ and then one carry into $a^{0}_{n_{p_0}-1}$ where $n_{p_0}-1\geq 1$. Thus, as single borrow from $a^0_{n_{p_0}}$ does not effect $G_{n_{p_0}}$ and a single carry into $a^{0}_{n_{p_0}-1}$ increases $G_{n_{p_0}}$ by 1, we have that

    $$G_{n_{p_1}}(a^1)\ \geq\  G_{n_{p_0}}(a^1)\ =\ G_{n_{p_0}}(a^0)+1$$

    \noindent as desired.
\end{proof}

Since $n_{p_1}+j_1$ is the first overfilled element of $a^1$, we have that $(a_n^1)_{n=1}^{n_{p_1} + j_1-1}$ is a $\vec{c}$-SR.

Inductively, if $a^{\ell}$ is a $\vtr{c}$-NSR and is defined for some $\ell\geq 1$, then we may define $n_{p_{\ell}}$ and $j_{\ell}$ such that $n_{p_\ell}+j_\ell$ is the first overfilled element of $a^\ell$. Then we have that $(a_n^\ell)_{n=1}^{n_{p_\ell} + j_\ell-1}$ is a $\vec{c}$-SR and employ the same case analysis as above to construct $a^{\ell+1}$. If $a^{\ell+1}$ is a $\vtr{c}$-SR then we are done. Otherwise, we may define $n_{p_{\ell+1}}$ and $j_{\ell+1}$ such that $n_{p_\ell+1}+j_{\ell+1}$ is the first overfilled element of $a^\ell$ and have that $(a_n^\ell)_{n=1}^{n_{p_\ell} + j_\ell-1}$ is a $\vec{c}$-SR. Furthermore, by applying Claim 1, $n_{p_{\ell+1}}\geq n_{p_{\ell}}$ and either $G(a^{\ell+1})<G(a^{\ell})$ or $G_{n_{p_{\ell+1}}}(a^{\ell+1})>G_{n_{p_{\ell}}}(a^{\ell})$.\\

\noindent \textbf{Claim 2:} There exists a $q\in \N$ such that $a^{q}$ is a $\vtr{c}$-SR.

\begin{proof}[Proof of Claim 2.]
    Suppose not. Let $\alpha=G(a^0)$, $\beta:=\lceil\frac{\alpha}{c-1}\rceil$, and $Z:=\{i\in \N\ :\ G(a^{i})<G(a^{i-1})\}$. Note that $|Z|\leq \beta$. Indeed, if $|Z|> \beta$ then there exists a subset $\{z_1,\ldots,z_{\beta+1}\}\subset Z$ where $z_{j}<z_{j+1}$ for each $1\leq j\leq \beta$. Then

    \begin{align*}
        G(a^{z_{\beta+1}})\ \leq\ G(a^{z_{\beta}})-(c-1)\ \leq\ G(a^{z_{\beta-1}})-2(c-1)\ &\leq\ \cdots\ \leq\ G(a^{z_1})-\beta(c-1)\\
        &\leq\ \alpha-(\beta+1)(c-1)\ <\ 0.
    \end{align*}

    Let $z:=\max\{i\in \N\ :\ i\in Z\}$ and define $\gamma:=G(a^{z})$. Note that for all $j>z$ it must be the case that $G(a^{j})=G(a^{j-1})$ and $G_{n_{p_j}}(a^j)\geq G_{n_{p_{j-1}}}(a^{j-1})+1$. However, if we examine $q=z+\gamma$ then

    $$\gamma\ <\ G_{n_{z}}(a^z)+\gamma\ \leq\ G_{n_{p_{q}}}(a^{q})\ \leq\  G(a^q)\ \leq\  G(a^z)\ =\ \gamma. $$

    Hence, we have the desired contradiction.
    
\end{proof}
The proof is complete as Claim 2 establishes that process must terminate in a finite number of iterations to a $\vtr{c}-SR$.\\
\end{proof}

We now prove our main result.

\begin{proof}[Proof of Theorem \textcolor{blue}{\ref{thm: main}}]

We need to show both existence and uniqueness of the desired representations.\\
  
\textbf{Existence:} By definition, $\vtr{0}\in \mathbb{Z}^{k-1}$ has a $\vtr{c}$-satisfying representation.  By induction we need only to show that if $\vtr{v}$ has a  $\vtr{c}$-satisfying representation then so does $\vtr{v}+\vtr{e}_i$ for any $1\leq i\leq k$. Indeed, all positive vectors can be obtained by adding linear combinations of the basis vectors $\vtr{X}_i=\vtr{e}_i$ for $1\leq i\leq k-1$ and then as $\vtr{X}_k=\sum_{i=1}^{k-1}-c_i\vtr{e}_i$, adding multiples of $\vtr{X}_k$ reaches all possible vectors. These inductive steps involve increasing a coeﬃcient in a
$\vtr{c}$-satisfying representation by one. Either the resulting representation is already a $\vtr{c}$-satisfying representation and we are done or it is a $\vtr{c}$NSR. Hence, existence follows from Proposition \textcolor{blue}{\ref{prop: general cSR}}.\\

\textbf{Uniqueness:} Similar to the case of \textcolor{blue}{\cite{ABJ11}}, Lemma \textcolor{blue}{\ref{lem:S_n}} implies that $S_n$ is one-to-one in representations satisfying $\vtr{c}$ of the form $\sum_{i=1}^{n-1}a_i \vtr{X}_{-i}$. Hence, uniqueness follows from the uniqueness of a $\textit{legal}$ decomposition for a PLRS (see \textcolor{blue}{\cite{KKMW11}}, Theorem 1.3).

\end{proof}

\section{Inherited Properties From the Scalar Case}

We extend the probabilistic analysis of Zeckendorf decompositions to the multidimensional setting. By establishing a bijection between vector-valued decompositions and their scalar counterparts, we inherit various probabilistic properties for $\vtr{c}$-SR's.
% This are defintions for theoremos we are aiming to extend

%%%%%%%%%%%%%%%%%%%%%%%%%%%%%%%%%%%%%%%%%%%

% New Defintions for inherit properties
\begin{definition} \label{def: Rn}
    Let $n\in \N$; we define 
    \begin{align*}   
        R_n^{\vec{c}} \ :=\ S_n^{-1}\Big[ [X_n, X_{n+1}) \Big], 
    \end{align*}
    where $X_n$ is defined as in Definition \textcolor{blue}{\ref{def:c-rec seq}}, and $S_n$ is as in Definition \textcolor{blue}{\ref{def:Sn}}.
\end{definition}
This definition serves as the natural way of generalizing the interval $[X_n, X_{n+1})$ to higher dimensions; using Remark \textcolor{blue}{\ref{remark:RnD}}, in Figure \textcolor{blue}{\ref{fig: D_10 in (2,1,1) large}} we illustrate $R_{1}^{\vec{c}}$ to $R_{10}^{\vec{c}}$ for  $\vtr{c}=(2,1,1)$. As expected, many properties that hold for decompositions of elements in $[X_n, X_{n+1})$ also hold in the multidimensional analogue. 

For clarity, we provide two concrete examples of properties that our multidimensional Zeckendorf representations inherit through the the map $S_n$. There are many more statistical and probabilistic properties than those mentioned (for example \textcolor{blue}{\cite{BILMTB-A15}}, \textcolor{blue}{\cite{BILMT13}} and \textcolor{blue}{\cite{BBGILMT13}}).

% Gausiannity
\subsection{Gaussian Convergence}

\begin{definition} (Associated Probability Space to a Positive Linear Recurrence Sequence) \textcolor{blue}{\cite{MW12}}. Let $(H_n)_{n=1}^\infty$ be a PLRS. For each $n$, consider the discrete outcome space
\begin{align*}
    \Omega_n \ := \ \{H_n,\ H_n+1, \ \ldots, \ H_{n+1}-1 \}
\end{align*}
with probability measure
\begin{align*}
    \mathbb{P}_n(A) \ := \ \sum_{\omega \in A} \dfrac{1}{H_{n+1} - H_n}, \ A\subset \Omega_n.
\end{align*}
In addition, define the random variable $K_n$ by setting $K_n(\omega)$ equal to the number of summands of $\omega \in \Omega_n$.
\end{definition}

This definition is extended naturally to higher dimensions in the following way.
\begin{definition}(Inherited Probability Space to a Positive Linear Recurrence Vector Sequence). Let $\vec{c}$ be a weakly decreasing vector and $c_k = 1$. Consider the discrete outcome space 
\begin{align*}
    \Omega_n^{\vec{c}} \ := \ R_n^{\vec{c}} ,
\end{align*}
with probability measure
\begin{align*}
    \mathbb{P}_n^{\vec{c}}(B) \ :=\ \mathbb{P}_n(S_n[B]) ,\ B \subset \Omega_n^{ \vec{c} };
\end{align*}
 where $S_n$ is defined in Definition \textcolor{blue}{\ref{def:Sn}}. We define the random variable $K_n^{\vec{c}}$ by setting $K_n^{\vec{c}}(\omega)$ equal to the number of summands of $\omega \in \Omega_n^{\vec{c}}$; equivalently $K_n^{\vec{c}}(\omega) = G(a)$, where $a$ is the unique $\vec{c}$-SR of $\omega$.
\end{definition}

\begin{lemma} \label{lem:Sn&Kn}
    The map ${S_n\restriction_{R_n^{\vec{c}}}}:R_n^{\vec{c}} \rightarrow [X_n, X_{n+1})$ is a bijection and $K_n^{\vec{c}}(w) = K_n(S_n(w))$.
\end{lemma}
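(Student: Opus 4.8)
\textbf{Proof plan for Lemma \ref{lem:Sn&Kn}.}

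The plan is to break the statement into its two assertions and handle them in order, leaning on Lemma \ref{lem:S_n} and Theorem \ref{thm: main} as the main tools. First I would show that $S_n\restriction_{R_n^{\vec c}}$ is a bijection onto $[X_n,X_{n+1})$. Recall that $R_n^{\vec c}$ is \emph{defined} in Definition \ref{def: Rn} as $S_n^{-1}\big[[X_n,X_{n+1})\big]$, so the content here is only that the restriction is injective; surjectivity onto $[X_n,X_{n+1})$ then follows immediately from the definition of preimage together with injectivity, once we know every value in $[X_n,X_{n+1})$ is actually attained by $S_n$ on $\mathbb Z^{k-1}$. To get injectivity, I would invoke the uniqueness half of Theorem \ref{thm: main}: every $\vec v\in\mathbb Z^{k-1}$ has a unique $\vec c$-SR, say $\vec v=\sum_{i=1}^{m}a_i\vtr X_{-i}$. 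For $\vec v$ with $S_n(\vec v)\in[X_n,X_{n+1})$, one argues that the supporting indices satisfy $m\le n-1$, exactly because a $\vec c$-SR that reached index $n$ or beyond would, under $S_n$ (which by Lemma \ref{lem:S_n} sends $\sum a_i\vtr X_{-i}$ to $\sum a_i X_{n-i}$), produce a value that is either $\geq X_{n+1}$ or does not land in the right coset modulo $X_n$; this is precisely the scalar statement that legal decompositions of integers in $[X_n,X_{n+1})$ use only $X_1,\ldots,X_n$. Then Lemma \ref{lem:S_n} gives $S_n(\vec v)=\sum_{i=1}^{n-1}a_i X_{n-i}$, and since the $(a_i)_{i=1}^{n-1}$ form a legal decomposition of the integer $S_n(\vec v)$, uniqueness of legal decompositions in the scalar PLRS setting (\cite{KKMW11}, Theorem 1.3) forces the coefficients to be determined by $S_n(\vec v)$; hence $\vec v$ itself is determined, giving injectivity. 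Surjectivity: every integer $t\in[X_n,X_{n+1})$ has a legal decomposition $t=\sum_{i=1}^{n-1}a_i X_{n-i}$, and then $\vec v:=\sum_{i=1}^{n-1}a_i\vtr X_{-i}$ satisfies $S_n(\vec v)=t$ by Lemma \ref{lem:S_n}, so $\vec v\in R_n^{\vec c}$ maps to $t$.

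For the second assertion, $K_n^{\vec c}(w)=K_n(S_n(w))$, I would unwind the definitions: $K_n^{\vec c}(w)=G(a)$ where $a=(a_i)_{i=1}^{\infty}$ is the unique $\vec c$-SR of $w$, and from the previous paragraph this $\vec c$-SR is supported on indices $1,\ldots,n-1$ and corresponds under $S_n$ (via Lemma \ref{lem:S_n}) to the legal decomposition $S_n(w)=\sum_{i=1}^{n-1}a_i X_{n-i}$ of the integer $S_n(w)\in[X_n,X_{n+1})$. Since the number of summands $K_n$ of an element of $\Omega_n$ is the sum of the coefficients in its legal decomposition, we get $K_n(S_n(w))=\sum_{i=1}^{n-1}a_i=G(a)=K_n^{\vec c}(w)$. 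The one point needing care is that the coefficient string of the $\vec c$-SR of $w$ really \emph{is} a legal decomposition of $S_n(w)$ in the scalar sense — i.e.\ the ``forbidden patterns'' and ``size bounds'' in Definition \ref{defintion:SR} match exactly those defining legal scalar decompositions. This is essentially the design principle behind Definition \ref{defintion:SR}, which was constructed to mirror the scalar legality conditions verbatim, so it should be a short observation rather than a real argument.

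I expect the main obstacle to be the surjectivity/support bookkeeping in the first part: pinning down cleanly why $S_n(\vec v)\in[X_n,X_{n+1})$ forces the unique $\vec c$-SR of $\vec v$ to have all its mass on indices $<n$, rather than merely on indices with $S_n$-image in the right range. The cleanest route is probably contrapositive — if the $\vec c$-SR of $\vec v$ has $a_m>0$ with $m\ge n$, then one can show $S_n(\vec v)$ cannot lie in $[X_n,X_{n+1})$ by reducing to the scalar fact that any nonempty legal string with a nonzero coefficient at position $\ge n$ represents an integer $\ge X_{n+1}$ or is congruent to a value outside the target coset, which is exactly the indexing convention already used in \cite{ABJ11} and encoded in Definition \ref{def:Sn}. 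Once that is in hand, everything else is a direct translation through Lemma \ref{lem:S_n} and the scalar uniqueness theorem, with no further computation required.
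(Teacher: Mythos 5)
Your proposal is correct and follows essentially the same route as the paper: both translate the problem through Lemma \ref{lem:S_n} into the scalar PLRS setting and deduce injectivity, and the identity $K_n^{\vec c}(w)=K_n(S_n(w))$, from the uniqueness of $\vec c$-SRs (Theorem \ref{thm: main}) together with the uniqueness of scalar legal decompositions. The only difference is one of detail: you spell out surjectivity by lifting a scalar legal decomposition of each $t\in[X_n,X_{n+1})$ and you flag the support bound on the $\vec c$-SR, whereas the paper treats surjectivity as immediate from Definition \ref{def: Rn} and leaves that bookkeeping implicit.
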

\begin{proof}
    By Definition \textcolor{blue}{\ref{def: Rn}} we have that $S_n \restriction_{R_n^{\vec{c}}}$ is surjective.
    Suppose that $\vec{v}_1, \vec{v}_2 \in R_n^{\vec{c}}$ are such that $S_n\restriction_{R_n^{\vec{c}}} (\vec{v}_1) = S_n \restriction_{R_n^{\vec{c}}} (\vec{v}_2)$. Let $a=(a_n)_{n=1}^{m_1}$ and $b=(b_n)_{n=1}^{m_2}$ the unique representations for $\vec{v}_1$ and $\vec{v}_2$ respectively, that is
    \begin{align*}
        \vec{v}_1 \ = \ \sum_{n=1}^{m_1} a_n \X_{-n} \ \  \ \text{and}\ \  \vec{v}_2 \ &= \ \sum_{n=1}^{m_2} b_n \X_{-n}.
    \end{align*}
    Then, due to Lemma \textcolor{blue}{\ref{lem:S_n}}, we have that
    \begin{align*}
     S_n \restriction_{R_n^{\vec{c}}} (\vec{v_1})\ = \ \sum_{n=1}^{m_1}a_n X_{m_1 - n} \  \ \text{and}\ \ S_n \restriction_{R_n^{\vec{c}}} (\vec{v}_2) \  = \ \sum_{n=1}^{m_2} a_n X_{m_2 -n}.
    \end{align*}
     Notice that $(X_n)_{n=1}^\infty$ is a PLRS and both $\sum_{n=1}^{m_1}a_n X_{m_1 - n}$ and $\sum_{n=1}^{m_2}a_n X_{m_2 - n}$ are satisfying representations. Thus, by the uniqueness of a PLRS representation \rm{(}\textcolor{blue}{\cite{MW12}}, [\emph{Theorem 1.1}]\rm{)} we have that $m_1 = m_2$ and $a_n = b_n$ for all $n\leq m_1$. In other words, $a = b$, which proves that the function is indeed injective. With this we have proven that $S_n \restriction_{R_n^{\vec{c}}}$ is bijective. Lastly, let $\vec{v}\in R_n$ and $a$ be its unique $\vec{c}$-SR. We have
    \begin{align*}
        K_n^{\vec{c}}(\vec{v}) \ &=\ G(a) \ = \sum_{n=1}^{\infty} a_n,
    \end{align*}
    which is exactly the number of summands of $S_n(\vec{v})$.
    
\end{proof}
In the 1950s, Lekkerkerker \textcolor{blue}{\cite{Lekkerkerker51}} answered the question: \textit{On average, how many summands are needed in the Zeckendorf decomposition?}\footnote{Sometimes it takes a while for papers to be widely seen, for example Lekkerkerker's theorem on the number of summands in Zeckendorf decompositions was published 20 years before Zeckendorf's paper! } He later proved that for every $m\in[F_n, F_{n+1})$, as $ n \rightarrow \infty $ the average number of summands needed is $ n/(\phi^2 + 1) $ , where $ \phi = \frac{1 + \sqrt{5}}{2}$ is the golden ratio.
Since Zeckendorf's theorem has been generalized one can naturally ask if Lekkerkerker's theorem still holds for these various generalizations. In \textcolor{blue}{\cite{MW12}} it was proved that the Gaussian convergence is a property that holds in general for a PLRS. More specifically, $\mathbb{E}[K_n]$ and Var$(K_n)$ are of order $n$, and as $n \rightarrow \infty$, $K_n$ converges to a Gaussian (see \textcolor{blue}{\cite{KKMW11}}, \textcolor{blue}{\cite{BILMT13}} and \textcolor{blue}{\cite{Lekkerkerker51}}). This result is generalized as follows.

\begin{theorem}
    Let $\{\X_n\}_{n \in \Z}$ be a %Positive Linear Vector Recurrence 
    $\vec{c}$-recurrence for a weakly decreasing vector $\vec{c}$ where $c_k = 1$. Then $\mathbb{E}[K_n^{\vec{c}}]$ and {\rm Var}$(K_n^{\vec{c}})$ are of order $n$, and as $n \rightarrow \infty$, $K_n^{\vec{c}}$ converges to a Gaussian.
\end{theorem}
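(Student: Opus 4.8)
The plan is to reduce the theorem entirely to the scalar PLRS case through the bijection $S_n$ of Lemma~\ref{lem:Sn&Kn}. The essential observation is that $S_n\restriction_{R_n^{\vec{c}}}$ is not merely a bijection $R_n^{\vec{c}}\to[X_n,X_{n+1})$ but an isomorphism of finite probability spaces carrying the vector summand count $K_n^{\vec{c}}$ onto the scalar summand count $K_n$; consequently $K_n^{\vec{c}}$ and $K_n$ have the same distribution for every $n$, and the order of the mean, the order of the variance, and the Gaussian limit all transfer verbatim from the known scalar statement of Miller and Wang \cite{MW12}.

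First I would invoke the scalar result. The sequence $(X_n)_{n=1}^\infty$ of Definition~\ref{def:c-rec seq} is a PLRS in the sense of \cite{KKMW11, MW12}: the conditions $c_1>0$ and $c_2,\dots,c_{k-1}\ge 0$ already suffice, and neither $c_k=1$ nor weak monotonicity is needed for the scalar theory (those hypotheses are used only to guarantee, via Theorem~\ref{thm: main}, that $\Omega_n^{\vec{c}}=R_n^{\vec{c}}$ is the correct object and that $\vec{c}$-SR's exist and are unique). Hence the Gaussian theorem for PLRS's applies: $\mathbb{E}[K_n]$ and ${\rm Var}(K_n)$ are of order $n$, and $\big(K_n-\mathbb{E}[K_n]\big)/\sqrt{{\rm Var}(K_n)}$ converges in distribution to a standard Gaussian as $n\to\infty$.

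Next I would verify the measure transfer. By Lemma~\ref{lem:Sn&Kn}, $S_n\restriction_{R_n^{\vec{c}}}$ is a bijection onto $\Omega_n=[X_n,X_{n+1})$, so $|\Omega_n^{\vec{c}}|=|R_n^{\vec{c}}|=X_{n+1}-X_n$, and since $\mathbb{P}_n$ is the uniform measure on $\Omega_n$, the definition $\mathbb{P}_n^{\vec{c}}(B)=\mathbb{P}_n(S_n[B])$ yields $\mathbb{P}_n^{\vec{c}}(B)=|B|/(X_{n+1}-X_n)$; thus $\mathbb{P}_n^{\vec{c}}$ is a genuine (uniform) probability measure and $S_n\restriction_{R_n^{\vec{c}}}$ is measure preserving. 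Combining this with the identity $K_n^{\vec{c}}(w)=K_n(S_n(w))$ from the same lemma, for every real $t$,
\[
\mathbb{P}_n^{\vec{c}}\big(K_n^{\vec{c}}\le t\big)\ =\ \mathbb{P}_n\big(S_n\big[\{w\in R_n^{\vec{c}}: K_n(S_n(w))\le t\}\big]\big)\ =\ \mathbb{P}_n\big(K_n\le t\big),
\]
so $K_n^{\vec{c}}$ and $K_n$ are equal in law. In particular $\mathbb{E}[K_n^{\vec{c}}]=\mathbb{E}[K_n]$ and ${\rm Var}(K_n^{\vec{c}})={\rm Var}(K_n)$ for all $n$, and the normalized variables have identical distributions; the theorem follows immediately from the scalar statement above.

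There is no real obstacle here — the substance lives in Theorem~\ref{thm: main} (existence and uniqueness of $\vec{c}$-SR's) and in Lemma~\ref{lem:Sn&Kn} (the bijection intertwining $K_n^{\vec{c}}$ with $K_n$). The only point requiring a moment's care is checking that $\mathbb{P}_n^{\vec{c}}$ is indeed a probability measure, which reduces to knowing that $S_n$ maps $R_n^{\vec{c}}$ \emph{bijectively} onto all of $\Omega_n$: surjectivity is immediate from the definition of $R_n^{\vec{c}}$ in Definition~\ref{def: Rn}, injectivity is Lemma~\ref{lem:Sn&Kn}, and the pushforward of a uniform measure under such a bijection is again uniform. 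Everything beyond that is formal.
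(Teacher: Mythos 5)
Your proposal is correct and follows exactly the paper's route: the paper's proof is simply the one-line observation that the result follows from Lemma~\ref{lem:Sn&Kn}, i.e., the measure-preserving bijection $S_n\restriction_{R_n^{\vec{c}}}$ identifies $K_n^{\vec{c}}$ in law with the scalar $K_n$, so the Gaussian behavior of \cite{MW12} transfers. You have merely written out the measure-transfer details that the paper leaves implicit.
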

\begin{proof}
    This follows immediately from Lemma \textcolor{blue}{\ref{lem:Sn&Kn}}.
\end{proof}

\subsection{Summand Minimality}\label{sec: SM}
We call a representation of $\vec{v}$ \textbf{\textit{summand minimal}} if no other representations of $\vec{v}$ uses fewer summands. We say that a Positive Linear Recurrence Vector $\X$ is \textbf{\textit{summand minimal} }if its $\vec{c}-SR$ is summand minimal for all $\vec{v}$. In \textcolor{blue}{\cite{CHHMPV18b}} it is proved that a PLRS is summand minimal if and only if its recurrence vector is weakly decreasing. This result is now naturally generalizes as follows.

\begin{theorem}\label{thm: c minimal}
    A %positive linear vector recurrence sequence 
    $\vec{c}$-recurrence with recurrence vector $(c_1, \ \ldots, \ c_k)$ is summand minimal if and only if $c_1\geq c_2\geq \cdots \geq c_k$; i.e., $\vec{c}$ is weakly increasing. 
\end{theorem}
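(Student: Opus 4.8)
The plan is to reduce Theorem \ref{thm: c minimal} to the scalar summand-minimality result of \textcolor{blue}{\cite{CHHMPV18b}} via the map $S_n$, just as Gaussian behavior was inherited through Lemma \textcolor{blue}{\ref{lem:Sn&Kn}}. For the forward direction (weakly decreasing $\Rightarrow$ summand minimal), fix $\vtr{v}\in\Z^{k-1}$ and let $a$ be its $\vtr{c}$-SR. Suppose some other $\vtr{c}$-representation $a'$ of $\vtr{v}$ had strictly fewer summands, i.e. $G(a')<G(a)$. Choose $n$ large enough that both representations are supported on indices $1,\ldots,n-1$ (so that $\vtr{v}\in R_n^{\vtr{c}}$, which I would need to check is possible — this is the one place care is required). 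Then apply $S_n$: by Lemma \textcolor{blue}{\ref{lem:S_n}}, $S_n$ sends $a$ to the legal decomposition of the integer $S_n(\vtr{v})\in[X_n,X_{n+1})$ with $G(a)$ summands, and sends $a'$ to \emph{some} representation of the same integer over the PLRS $(X_m)$ with $G(a')<G(a)$ summands. This contradicts summand minimality of the scalar $\vtr{c}$-legal decomposition, which holds by \textcolor{blue}{\cite{CHHMPV18b}} precisely because $\vtr{c}$ is weakly decreasing. Hence $a$ is summand minimal.

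For the converse (not weakly decreasing $\Rightarrow$ not summand minimal), I would likewise transfer the scalar counterexample. If $\vtr{c}$ is not weakly decreasing, then by \textcolor{blue}{\cite{CHHMPV18b}} there is an integer $N$ whose $\vtr{c}$-legal decomposition is not summand minimal, witnessed by a non-legal representation with fewer summands; taking $n$ with $N\in[X_n,X_{n+1})$ and large enough to contain the support of the shorter representation, I would pull both representations back through the bijection $S_n\restriction_{R_n^{\vtr{c}}}$ (Lemma \textcolor{blue}{\ref{lem:Sn&Kn}}) to obtain two $\vtr{c}$-representations of the vector $S_n^{-1}(N)\in R_n^{\vtr{c}}$, one of which — the pullback of the $\vtr{c}$-SR — uses strictly more summands than the other. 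This shows the $\vtr{c}$-recurrence vector is not summand minimal.

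The main obstacle is purely bookkeeping about the index $n$: I need that for any finitely-supported $\vtr{c}$-representation (not just the $\vtr{c}$-SR) of a vector $\vtr{v}$, one can pick $n$ with $\vtr{v}\in R_n^{\vtr{c}}$ and with $n$ exceeding the support of \emph{all} the finitely many representations in play, so that Lemma \textcolor{blue}{\ref{lem:S_n}} applies verbatim and the images land in $[X_n,X_{n+1})$. This follows because $S_n(\vtr{v})$ is eventually in $[X_n,X_{n+1})$ once $n$ is large (the $\vtr{c}$-SR of $\vtr{v}$ has a well-defined top index $m$, and for $n>m$ one reads $S_n(\vtr{v})=\sum a_i X_{n-i}$, which lies in $[X_n,X_{n+1})$ exactly when $a$ is the $\vtr{c}$-SR with leading index $n$ — so I may need to shift $n$ to match the leading term, a standard maneuver). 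Once this is pinned down, both directions are immediate from the cited scalar theorem and the bijectivity of $S_n\restriction_{R_n^{\vtr{c}}}$, and the proof is short.

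Note: the theorem statement as written says "weakly increasing" in the last clause but the displayed inequality $c_1\geq c_2\geq\cdots\geq c_k$ is weakly decreasing; I would state and prove it with the inequality $c_1\geq\cdots\geq c_k$, matching Definition \textcolor{blue}{\ref{Def: weakly dec}} and \textcolor{blue}{\cite{CHHMPV18b}}.
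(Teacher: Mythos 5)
Your route is the same as the paper's (the paper's proof is exactly the one-line reduction to \textcolor{blue}{\cite{CHHMPV18b}} via the translation of $\vtr{c}$-SR's into legal scalar decompositions), but the step your write-up actually rests on --- that $S_n$ ``sends $a'$ to some representation of the \emph{same} integer'' whenever $a'$ is another $\vtr{c}$-representation of the same vector $\vtr{v}$ --- is false, and this, not the bookkeeping about $n$ that you flag, is the real obstacle. Lemma \ref{lem:S_n} is an identity modulo $X_n$: for $i\geq k$ the exact evaluation $\vtr{X}_{-i}\cdot(X_{n-1},\ldots,X_{n-k+1})$ equals $X_{n-i}$ plus a generally nonzero multiple of $X_n$ (already $\vtr{X}_{-k}\cdot(X_{n-1},\ldots,X_{n-k+1})=X_{n-k}-X_n$). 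Concretely, take $k=2$, $\vtr{c}=(1,1)$, $\vtr{v}=1\in\Z^{1}$: the $\vtr{c}$-SR is $a=(1)$, while $a'=(2,1)$ is another nonnegative representation since $2\vtr{X}_{-1}+\vtr{X}_{-2}=2-1=1$; their scalar evaluations are $X_{n-1}$ and $2X_{n-1}+X_{n-2}=X_{n-1}+X_n$, which are different integers for every $n\geq 3$. The discrepancy is a nonnegative multiple of $X_n$ determined by $a'-a$, so no enlargement or shift of $n$ removes it (this example does not contradict the theorem, since $3\geq 1$; it only refutes the ``same integer'' claim). With the discrepancy present, the scalar theorem bounds $G(a')$ below by the legal summand count of $N+tX_n$, not of $N=S_n(\vtr{v})$, and the monotonicity one would then need fails in general: for Tribonacci, $N=X_{n-1}+X_{n-2}$ has legal count $2$ while $N+X_n=X_{n+1}$ has count $1$. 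Your converse direction has the mirror-image defect: two scalar representations of the same integer need not pull back to the same vector, because the scalar side satisfies extra boundary relations among $X_1,\ldots,X_k$ (e.g.\ $X_2=2X_1$ when $\vtr{c}=(1,1)$) that are not satisfied by $\vtr{X}_{-1},\vtr{X}_{-2},\ldots$; also note $\vtr{v}$ lies in exactly one $R_{n_0}^{\vtr{c}}$, so one cannot both take $n$ large and keep $\vtr{v}\in R_n^{\vtr{c}}$.

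To be fair, the paper's own proof is just as terse and does not confront this either, so you have matched its approach; but once the argument is spelled out at your level of detail the congruence-versus-equality issue must be addressed. A repair that stays close to the paper's toolkit would be to argue directly on the vector side: show that any $\vtr{c}$-representation of $\vtr{v}$ can be converted into the $\vtr{c}$-SR by borrow/carry moves whose net effect never increases $G$ (in the spirit of Proposition \ref{prop: general cSR} together with Remark \ref{Rem:sumofcoefs bor/car}, where each borrow is compensated by at least one carry), or else characterize exactly which scalar representations arise as evaluations of vector representations of a fixed $\vtr{v}$ and prove the needed comparison for that restricted class.
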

\begin{proof}
    This follows directly from \textcolor{blue}{\cite{CHHMPV18b}}, Theorem 1.1.
Indeed, all of the $\vec{c}$-SR translate into legal representations in $\Z$, where the theorem holds.

\end{proof}

\section{Illustrations and Further Research}

Throughout this section $\vtr{c}\in \Z^3$ denotes an arbitrary vector and $(\vtr{X}_n)_{n\in \Z}$ the corresponding $\vtr{c}$-recursive vector sequence as in Definition \textcolor{blue}{\ref{Def: Rec Vectors}}.

% Credit: Tianyu

\begin{definition}\label{Def: D_n's}

We define

$$D_n\ =\ \left\{ \vtr{v} \in \mathbb{Z}^{2} \mid \vtr{v} = \sum_{i=1}^n a_i \vtr{X}_{-i} \right\}.$$
    
\end{definition}

\begin{remark}
     By this definition, $D_n=\cup_{i=1}^n R_i$ and the number of points in \( D_n \) is \( |D_n| = X_{n+1} \).  
\end{remark}

\begin{remark}\label{remark:RnD}
    In general\footnote{Definition \textcolor{blue}{\ref{Def: D_n's}} is naturally extended to higher dimensions.}, for all $n \in \N$,  we have that $R_n^{\vec{c}} = D_n \setminus D_{n-1}.$
\end{remark}

Figures \textcolor{blue}{\ref{fig:D1toD7}} and \textcolor{blue}{\ref{fig: D_10 in (2,1,1) large}} demonstrate how, in the case where $\vtr{c}=(2,1,1)$, each region, $D_n$, is recursively constructed from $D_{n-1}$ and spirals away from $\vtr{0}$. 

\begin{figure}[ht]
    \centering
    \includegraphics[width=0.25\textwidth]{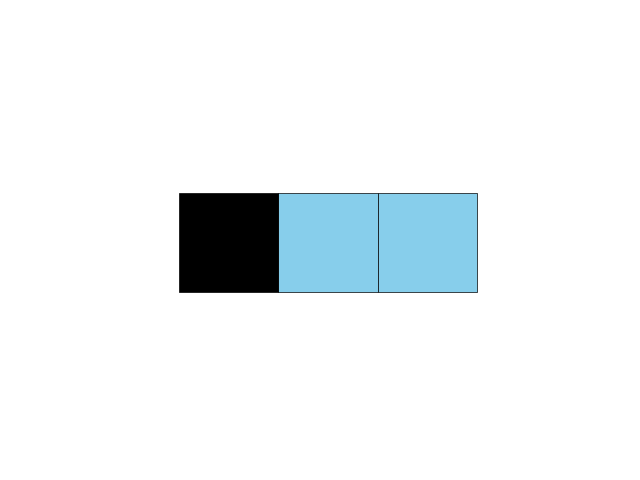}
    \includegraphics[width=0.25\textwidth]{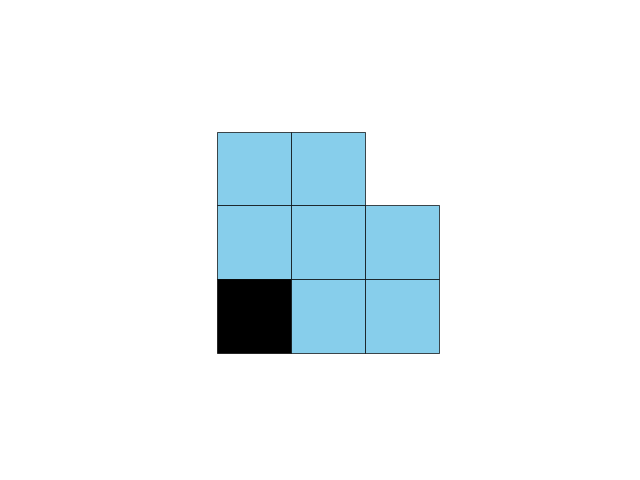}
    \includegraphics[width=0.25\textwidth]{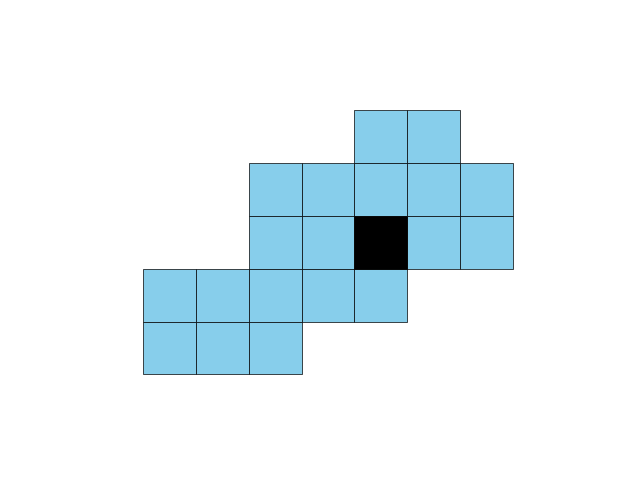}
    \includegraphics[width=0.25\textwidth]{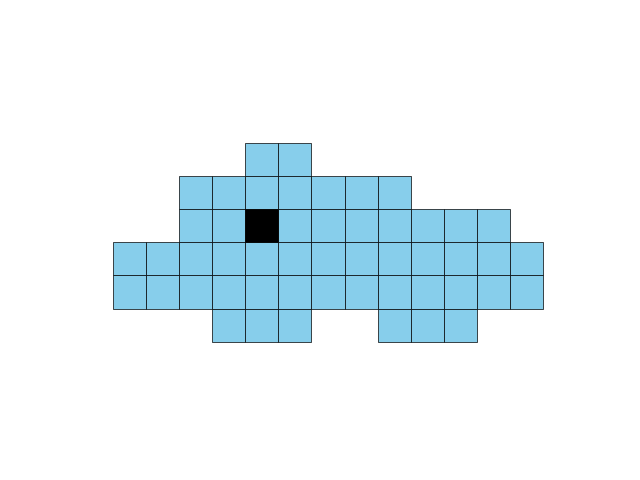}
    \includegraphics[width=0.25\textwidth]{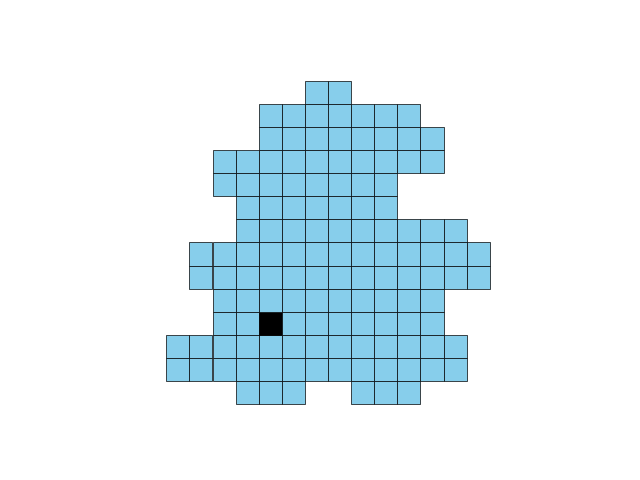}
    \includegraphics[width=0.25\textwidth]{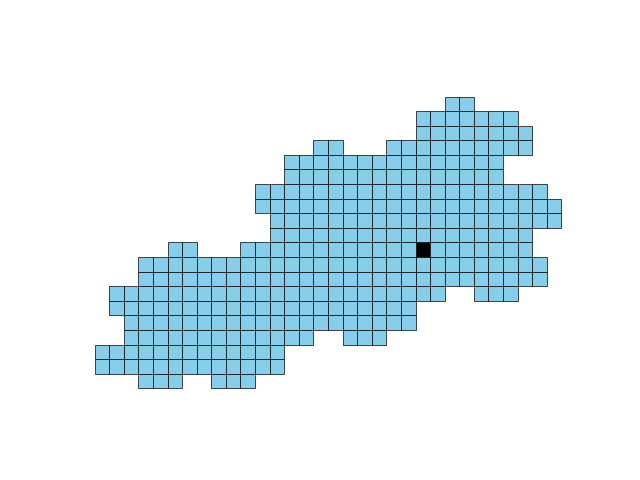}
    \caption{Regions $D_1$,\ \ldots,\ $D_6$ for $\vtr{c}=(2,1,1)$. The black square indicates $\vec{\mathbf{0}} \in \mathbb{Z}^2$.}
    \label{fig:D1toD7}
\end{figure}

\begin{figure}[htp]
    \hspace{-1cm}
    \begin{subfigure}[htp]{.47\textwidth}
        \centering
        \includegraphics[width=1.2\textwidth]{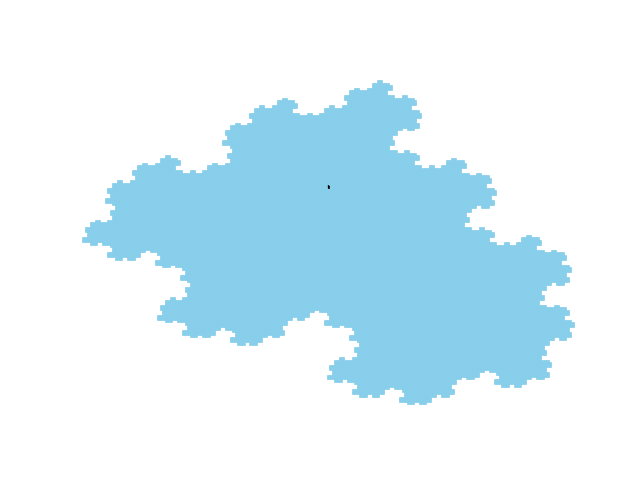}
        %\caption{Region $D_{10}$ in a (2,1,1).}
        \label{fig:D10_in_211}
    \end{subfigure}
 \hfill
 \hspace*{-1.5cm}
    \begin{subfigure}[htp]{.47\textwidth}
        \centering
        %\vspace{4cm}
        \includegraphics[width=1.2\textwidth]{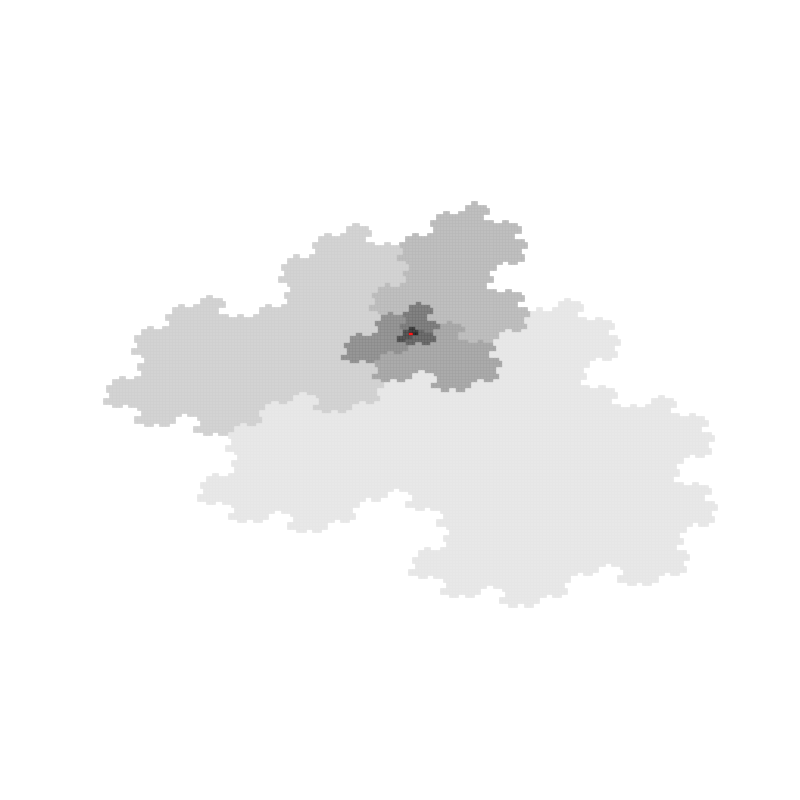}
        %\caption{Regions $R_1$, \ldots ,$R_{10}$ in a (2,1,1).}
        \label{fig:R1_to_R10_in_211}
    \end{subfigure}
    \vspace{-2cm}
    \caption{Region $D_{10}$ and regions $R_1$,\ \ldots,\ $R_{10}$ for $\vtr{c}=(2,1,1)$ respectively.}
    \label{fig: D_10 in (2,1,1) large}
\end{figure}

 However, in many case where $\vtr{c}$ is not weakly decreasing the sets, $D_n$, display similar behavior. For example, if $\vtr{c}=(1,2,1)$ then Figures \textcolor{blue}{\ref{fig:D1toD7 2}} and \textcolor{blue}{\ref{fig: D_10 in (1,2,1) large}} also seem to be slowly expanding outword in a spiral seem and eventually encompass all of $\mathbb{Z}^2$. This leads us to our first question.

\begin{figure}[ht]
    \centering
    \includegraphics[width=0.25\textwidth]{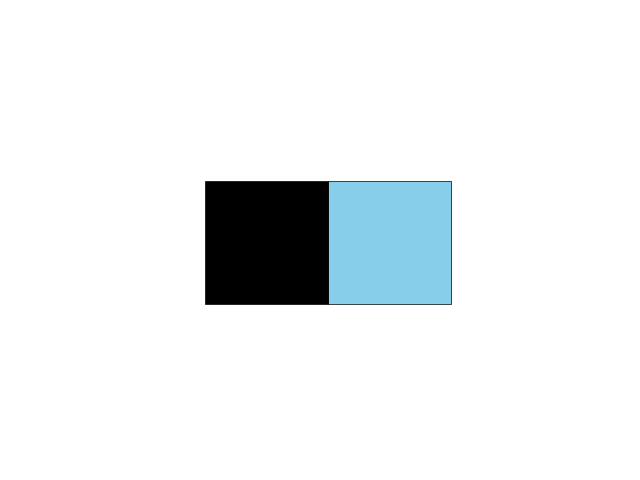}
    \includegraphics[width=0.25\textwidth]{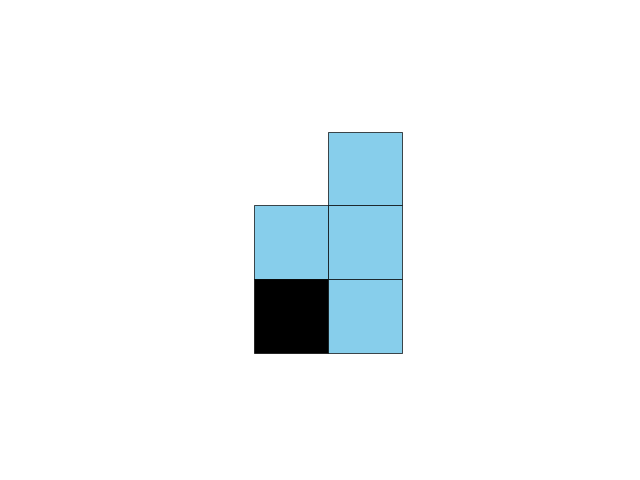}
    \includegraphics[width=0.25\textwidth]{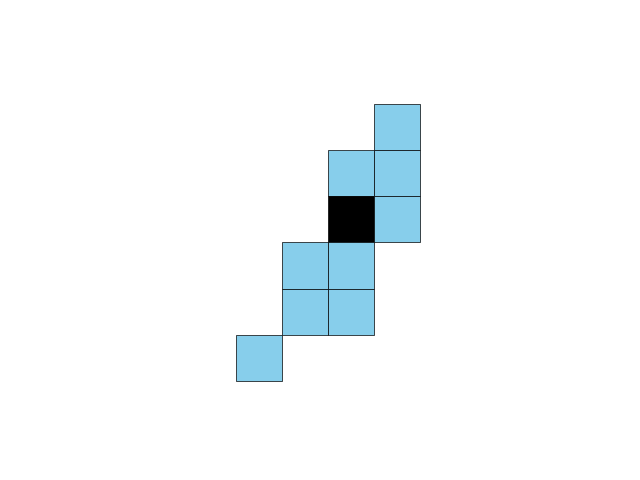}
    \includegraphics[width=0.25\textwidth]{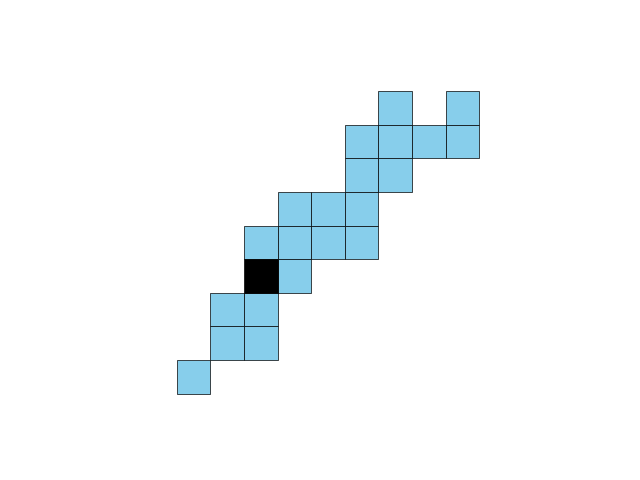}
    \includegraphics[width=0.25\textwidth]{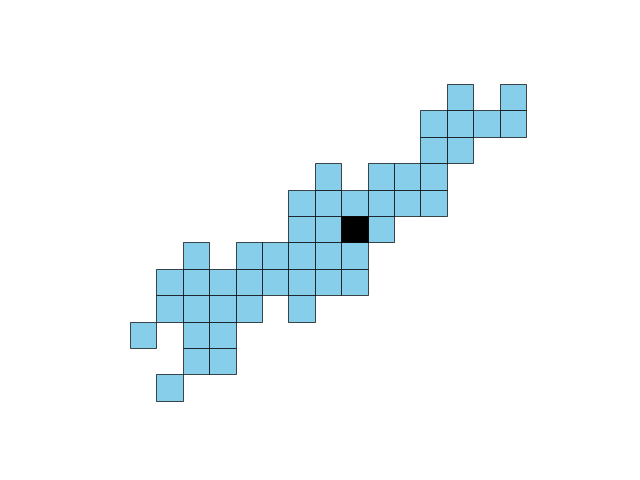}
    \includegraphics[width=0.25\textwidth]{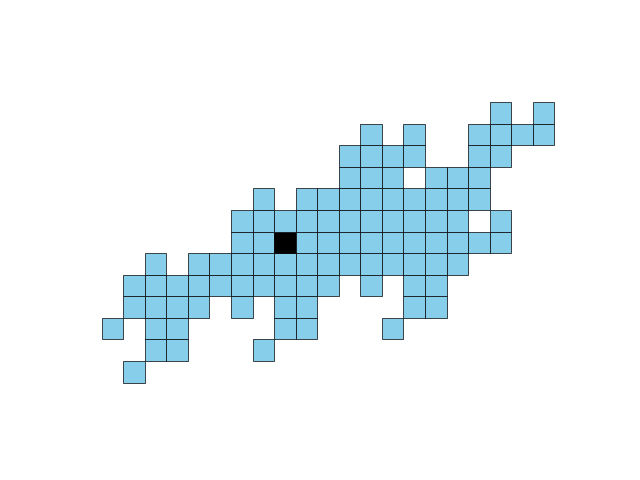}
    \caption{Regions $D_1$,\ \ldots,\ $D_6$ for $\vtr{c}=(1,2,1)$. The black square indicates $\vec{\mathbf{0}} \in \mathbb{Z}^2$.}
    \label{fig:D1toD7 2}
\end{figure}

\begin{figure}[htp]
    \centering
    \begin{subfigure}[htp]{.48\textwidth}
        \centering
        \includegraphics[width=1.2\textwidth]{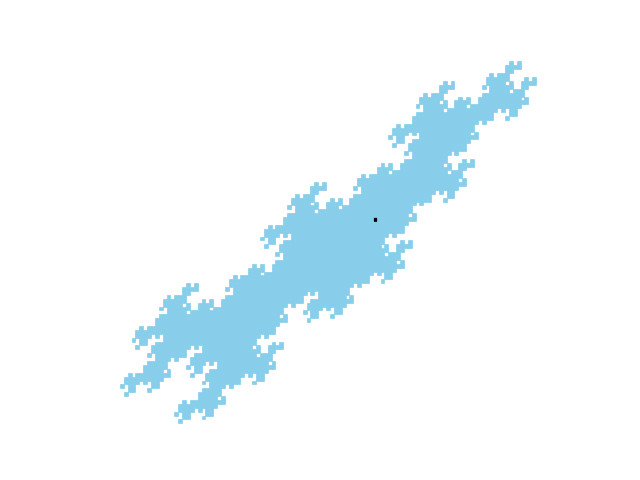}
        \label{fig:D10_in_121}
    \end{subfigure}
    \hfill
    \begin{subfigure}[htp]{.48\textwidth}
        \centering
        \hspace{-.2cm}
        \includegraphics[width=1.2\textwidth]{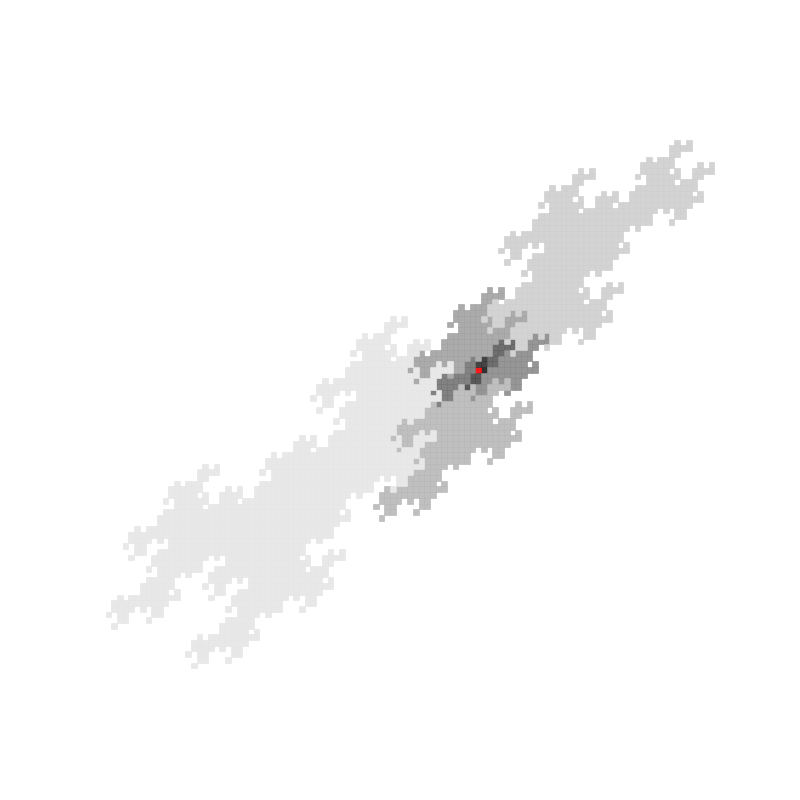}
        \label{fig:R1_to_R10_in_121}
    \end{subfigure}
    \vspace{-2cm}
    \caption{Region $D_{10}$ and regions $R_1$,\ \ldots,\ $R_{10}$ for $\vtr{c}=(1,2,1)$ respectively.}
    \label{fig: D_10 in (1,2,1) large}
\end{figure}

\begin{question}\label{Qu:c conditions}
    Which conditions on the vector $\vtr{c}$ characterize when the algorithm in the proof of Theorem \textcolor{blue}{\ref{thm: main}} terminates?
\end{question}

In the case where at least some of the terms of a vector $\vtr{c}$ are increasing, the algorithm utilized in the proof of Proposition \textcolor{blue}{\ref{prop: general cSR}} can fail to terminate.

\begin{example}\label{Ex: Bad Algorthm}
    Let $\vtr{c}=(1,3,1)$. The string $2$ is a $\vtr{c}$-NSR as the string $1$ is a $\vtr{c}$-SR. We follow the first few steps in the proof of Proposition \textcolor{blue}{\ref{prop: general cSR}}.

    \begin{enumerate}
        \item We borrow once from $2$ to obtain the string $1131$. Note that when reading this string from left to right, the first issue that disallows $1131$ from being a $\vtr{c}$-SR is the digit $3$. Note how in the case where some of the terms of $\vtr{c}$ increase, it is not necessarily so that a carry can always follow a borrow. 
        \item As $11$ forms a chunk ($1\leq 1$ and $1<3$) the first digit that presents a problem is $3$ which at most can be $1$. We borrow twice from $3$ into the lower digit. This converts $1131$ to $111362$. Then we can carry on with the third term and turn $111362$ into $120052$.
        \item As in the previous step $1200$ forms a chunk and $5$ is the first digit that presents a problem since $5>1$. We borrow 4 times from $5$ and obtain $120016(12)4$ and carry once into the fourth term to obtain $120103(11)4$ and then carry 3 times into the fifth term to obtain $12013021$.
        \item From left to right we have the chunks $120$ and $130$ and then the first digit to present a problem is the seventh term as $2<1$. We borrow 1 time from the seventh term to obtain $1201301231$.
        \item From left to right we have the chunks $120$, $130$, and $12$ and then the first digit to present a problem is the ninth term as $3>1$. We borrow twice from the ninth term to obtain $120130121362$ and carry once into the eighth term to obtain $120130130052$.
    \end{enumerate}

Examining the conversion of the digits $052$ from step (2) to (5) we see that the string $052$ is replaced by $130130052$. Hence, this process never terminates.
    
\end{example}

Nevertheless, it seems that, if $\vtr{c} = (c_1, c_2 ,c_3)$, $c_3 = 1$ and $c_{j+1} \leq c_j + 1$ for all $1\leq j < 3$, then the algorithm terminates. However, for vectors of length $k\geq 4$ this no longer appears to be the case. From this we make the following conjecture. 

\begin{conjecture}\label{con: alg term}
    If $k=3$ the algorithm in the proof of Theorem \textcolor{blue}{\ref{thm: main}} terminates if $\vtr{c}=(c_1,c_2,c_3)$ is such that $c_i\geq 0$ for all $1\leq i< 3$, $c_{i+1}-c_{i}\leq 1$ for all $1\leq i\leq k-1$, and $c_k=1$. Furthermore, if there exists $1\leq i<3$ such that $c_{i+1}-c_i\geq 2$, then there is a $\vtr{c}$-representation for which the algorithm fails to terminate. If $k\geq 4$ the algorithm in the proof of Theorem \textcolor{blue}{\ref{thm: main}} terminates iff $\vtr{c}$ is weakly decreasing.
\end{conjecture}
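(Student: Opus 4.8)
The plan is to treat the three assertions of the conjecture separately, since the termination and non-termination directions require genuinely different techniques. Throughout, write $a^0\to a^1\to a^2\to\cdots$ for the deterministic sequence of $\vtr{c}$-representations produced by the algorithm of Proposition \ref{prop: general cSR}: recall that each step is a single borrow from the first overfilled element followed by a finite run of carries, and that the whole run is determined once the starting $\vtr{c}$-NSR is fixed.

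\textbf{Termination for $k=3$.} When $\vtr{c}$ is weakly decreasing we are done by Proposition \ref{prop: general cSR}, so the new content is the cases where the hypothesis $c_{i+1}-c_i\le 1$ holds but weak decrease fails, i.e. $c_2=c_1+1$ or $c_2<c_3=1$; the representative hard case is $\vtr{c}=(c_1,c_1+1,1)$. The obstruction in the proof of Proposition \ref{prop: general cSR} is precisely that $G$ can now strictly increase, which destroys the bound $G_{n_p}(a)\le G(a)$ that powered Claim 2. I would try to repair this by exhibiting a well-founded potential tuned to the single allowed ascent: concretely, I would analyze one borrow-then-carry step at the first overfilled position $p$ and show that although $\sum_i a_i$ may rise, a refined quantity decreases at every step — either a lexicographic pair $\bigl(\text{position of the first overfilled element},\ -\,\text{weighted tail sum}\bigr)$, or a sum of the $a_i$ with geometrically decaying weights chosen so that a borrow-and-carry pair is strictly dissipative. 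The key estimate is to bound the new mass ``spilled'' to the right of $p$ in terms of $c_2-c_1=1$ and $c_3=1$, and to show that the carries triggered by the borrow always reach far enough back to more than pay for it, after a case split on the local pattern $(a_p,a_{p+1},a_{p+2},a_{p+3})$. I expect this to be the main obstacle of the whole conjecture: the correct potential is not visible from the weakly-decreasing argument, and the bookkeeping is delicate precisely because $G$ is not monotone.

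\textbf{Non-termination for $k=3$ with a jump $\ge 2$.} Here I would generalize Example \ref{Ex: Bad Algorthm}. The goal is, for any $\vtr{c}=(c_1,c_2,1)$ with $c_2\ge c_1+2$, to write down an explicit \emph{self-reproducing} configuration: a finite ``loop word'' $w$ and a starting representation such that the deterministic dynamics carries a string ending in $w$ to a strictly longer string still ending in $w$, so that $w$ persists while the support grows without bound. In Example \ref{Ex: Bad Algorthm} the suffix $052$ is regenerated as $130130052$, which is exactly such a cycle for $(1,3,1)$; I would isolate the structural reason — a borrow into a short tail of zeros produces, after the forced carries, a fixed loop block (here $130$) prepended to the same tail — and verify that this mechanism survives whenever $c_2-c_1\ge 2$, by expressing the loop word in terms of $c_1,c_2$ and checking the algorithm's forced choices on it. This part is essentially mechanical once the invariant block has been guessed correctly.

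\textbf{The case $k\ge 4$.} The forward implication (weakly decreasing $\Rightarrow$ termination) is again Proposition \ref{prop: general cSR}. For the converse I would build a self-reproducing block as above, the point being that the extra length $k\ge 4$ supplies enough intermediate coordinates that even an ascent of exactly $1$ — that is, merely failing to be weakly decreasing, with $c_{j+1}=c_j+1$ — is already fatal. Concretely I would first reduce to a single index $j$ with $c_{j+1}=c_j+1$ and all other adjacent differences $\le 0$ (handling the general non-weakly-decreasing vector by locating any such descent violation), then construct the corresponding loop word using the $k-1\ge 3$ coordinates to absorb and re-emit the spill from a borrow at coordinate $j$, and finally check the dynamics. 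The combinatorics are heavier than in the $k=3$ case but conceptually identical, so once the $k=3$ non-termination analysis is in hand I would expect this to be routine. Accordingly, the step I would attack first and regard as the crux is the positive termination statement for $\vtr{c}=(c_1,c_1+1,1)$.
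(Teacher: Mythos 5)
You should first note that the paper does not prove this statement: it is posed as Conjecture \ref{con: alg term}, supported only by Example \ref{Ex: Bad Algorthm} (the non-terminating loop for $\vtr{c}=(1,3,1)$, where the suffix $052$ regenerates as $130130052$) and by the empirical remark preceding it. So there is no proof in the paper to compare against, and your text must be judged on its own terms, as a plan rather than a proof.

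As a proof it has genuine gaps in all three parts, the largest of which you concede yourself. (i) For termination at $k=3$ you never exhibit the potential: you name two candidate shapes (a lexicographic pair, or a geometrically weighted mass) and the estimate you would need, but nothing is verified, and the step you propose to analyze --- one borrow followed by carries --- is not even the generic step of the dynamics outside the weakly decreasing regime. The proof of Proposition \ref{prop: general cSR} uses weak decrease precisely to guarantee that a carry is available immediately after the borrow; when that fails (as at the first step of Example \ref{Ex: Bad Algorthm}, where $2$ becomes $1131$ and no carry is possible), the algorithm must borrow repeatedly before any carry occurs, so your potential would have to be shown to decrease across such multi-borrow blocks, not across single borrow-carry pairs. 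You also flag $\vtr{c}=(c_1,c_1+1,1)$ as the representative hard case, but under the conjecture's hypotheses the other non-weakly-decreasing possibility is $c_2=0<c_3=1$, and you give no reduction of that case to the first. (ii) For non-termination at $k=3$ with $c_2-c_1\ge 2$ you assume a self-reproducing loop word exists for every such $(c_1,c_2,1)$ and can be written in terms of $c_1,c_2$, but no such word is constructed or checked; the one instance known is the paper's example, and producing and verifying the uniform loop word is exactly the content of this half of the conjecture, not a mechanical afterthought. (iii) For $k\ge 4$ the claim that even a unit ascent $c_{j+1}=c_j+1$ forces non-termination is asserted purely by analogy; the proposed reduction to a single ascent index is not a reduction (the vector is given, not chosen), and you offer no mechanism explaining why a unit ascent should be fatal for $k\ge 4$ while being (conjecturally) harmless for $k=3$ --- that asymmetry is the interesting phenomenon here, and any serious attack on the conjecture has to engage with it rather than treat the $k\ge4$ case as routine bookkeeping.
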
 

It may be the case that Theorem \textcolor{blue}{\ref{thm: main}} holds when some of the $c_i$'s are 0.

\begin{remark}\label{Rem: C-allspan}
Even in the case where $\vtr{c}=(c_1,c_2,\ \ldots, \ c_{k-1},1)$ and where the first $k-1$ terms are arbitrary nonnegative entries, most of the major portions of the Main Theorem still hold.

\begin{enumerate}
    \item Since the proof of uniqueness follows from the map $S_n$ being bijective into PLRS's any $\vtr{c}$-SR is unique.
    \item A finite number of the $\vtr{c}$-recurrence vectors still span $\Z^{k-1}$. In particular, if we suppose that $z$ is the maximum number of consecutive zeroes in $\vtr{c}$, then for any $\vtr{v}\in \Z^{k-1}$ we can find nonnegative integers $(a_n)_{n=1}^{k+z}$ such that

$$\vtr{v}\ =\ \sum_{n=1}^{k+z}a_n \vtr{X}_{-n}.$$

\begin{proof}
    Indeed, within the proof of existence for the Main Theorem, this amounts to showing that if $c_j\neq 0$ then the $j$\textsuperscript{th} element of $\vtr{X}_{-k}$ is negative and if $c_j=0$ and it is a $p$\textsuperscript{th} consecutive zero in $\vtr{c}$ ($c_{j-i}=0$ for each $1\leq i\leq p-1$ but $c_{j-p}\neq 0$) for some $1\leq p\leq z$ then the $j$\textsuperscript{th} element of $\vtr{X}_{-k-p}$ is negative. The first of these desired results follows from the fact that $\vtr{X}_{-k}=(-c_1,-c_2,\ \ldots,\ -c_{k-1})$. Now fix a $1\leq p\leq z$ and suppose that $c_j=0$ and it is the $p$\textsuperscript{th} consecutive zero in $\vtr{c}$.\\

We first show that in this case the $j$\textsuperscript{th} digits of each vector $\vtr{X}_{-k},\vtr{X}_{-k-1},\ \ldots,\ \vtr{X}_{-k-p+1}$ is zero. Indeed, we may proceed by induction where the base case, $\vtr{X}_{-k}$ has been shown above. Suppose this result has been shown for some $0\leq q<p-1$. Now

$$\vtr{X}_{-k-q-1}\ =\ \vtr{X}_{-q-1}-\sum_{i=1}^{k-1}c_i\vtr{X}_{-q-1-i}$$

\noindent and the $j$\textsuperscript{th} digit of $\vtr{X}_{-q-1-i}$ is zero unless $i=j-(q+1)$. However, $c_{j-(q+1)}=0$ since $c_j$ is a $p$\textsuperscript{th} consecutive zero. The desired result now follows by induction. \\

Lastly, we note that

$$\vtr{X}_{-k-p}\ =\ \vtr{X}_{-p}-\sum_{i=1}^{k-1}c_i\vtr{X}_{-p-i}. $$

Now the $j$\textsuperscript{th} digit of $\vtr{X}_{-p-i}$ is zero unless $i=j-p$, where it is $1$ for $\vtr{X}_{-j}$. However, $c_{j-p}\neq 0$ and so we see that the $j$\textsuperscript{th} coefficient of $X_{-k-p}$ is $-c_{j-p}\neq 0$ as desired.
\end{proof}
\end{enumerate}

\end{remark}

For those $\vtr{c}$ where a $\vtr{c}$-SR exists for each $\vtr{v}\in \Z^{k-1}$, it would be interesting find a rate of expansion outward from $\vtr{0}$ that is somehow dependent upon $\vtr{c}$. One such approach would involve attacking the following question.

\begin{question}\label{qu: expanding balls}
    For each $\vtr{v}\in \Z^{k-1}$ and $r\in \mathbb{N}$, define $B^{\infty}_r$ by

    $$B^{\infty}_r(\vtr{v})\ :=\ \{\vtr{w}\in \Z^{k-1}\ :\ \max_{1\leq i\leq k-1}\{|w_i-v_i|\}\leq r\ \}.$$
    
    \noindent Given a $\vtr{c}$ such that the algorithm in the proof of Theorem \textcolor{blue}{\ref{thm: main}} terminates and an $r\in \N$, what is the minimum $n\in \N$ such that $B^\infty_r(\vtr{0})\subset D_n$?
\end{question}

Lastly, there are many other generalizations of Zeckendorf's Theorem that could be explored in the multidimensional case. One such generalization are \textbf{\textit{$f$-decompositions}} introduced by \textcolor{blue}{\cite{DDKMMV14}}.

\begin{definition}\label{def: f-decomps}
    Given a function $f:\N_0 \rightarrow \N_0$ and a sequence of integers $(a_n)_{n=1}^\infty$, a sum $m = \sum_{i=0}^k a_{n_i}$ of terms of $(a_n)_{n=1}^\infty$ is an \textit{\textbf{$f$-decomposition of $m$ using $(a_n)_{n=1}^\infty$}} if for every $a_{n_i}$ in the $f$-decomposition, the previous $f(n_i)$ terms $(a_{n_i - f(n_i)},\ a_{n_i - f(n_i)+1},\  \ldots,\ a_{n_i}-1)$ are not in the $f$-decomposition.
\end{definition}

\begin{question}{\label{Qu: f-decomps}}
    For some family of functions, can one apply a similar strategy as in Theorem \textcolor{blue}{\ref{thm: main}} to generate multidimensional $f$ decompositions?
\end{question}

\section*{Acknowledgments}

We thank our colleagues from the 2025 Polymath Jr program. This work was supported by NSF Grant DMS2341670. It is a pleasure to dedicate this paper to Peter Anderson and Marjorie Bicknell-Johnson, both for their work which inspired this project as part of the 2025 Polymath Jr program, and for their service to the Fibonacci Association.

\medskip
%AMS classifications available at https://mathscinet.ams.org/mathscinet/freetools/msc-search
\noindent MSC2020: 11A67, 11B39, 11B34.
\end{document}